\documentclass[12pt, a4paper]{article}
\usepackage{latexsym}
\usepackage{amssymb}
\usepackage{amsfonts}
\usepackage{amsthm}
\usepackage{amsmath}
\usepackage[pdftex]{graphicx}
\usepackage{comment}
\usepackage{epstopdf}
\usepackage{epsfig}
\usepackage{float}

\textwidth16cm
\textheight23cm
\topmargin-1cm
\oddsidemargin0cm
\evensidemargin0cm
\frenchspacing

\newcommand{\eps}{{\varepsilon}}
\newcommand{\argdot}{\, . \,}

\newtheorem{theorem}{Theorem}[section]

\newtheorem{proposition}[theorem]{Proposition}

\newtheorem{definition}[theorem]{Definition}

\newtheorem{remark}[theorem]{Remark}

\newtheorem{algorithm}[theorem]{Algorithm}

\renewcommand{\theequation}{\arabic{section}.\arabic{equation}}

\begin{document}
\title{A variation of the Canadisation algorithm for the pricing of American options driven by L\'evy processes}
\author{Florian Kleinert\footnote{School of Mathematics, University of Manchester, {\sc Manchester, M13 9PL, United Kingdom}. E-mail: florian.kleinert@manchester.ac.uk} \and Kees van Schaik\footnote{School of Mathematics, University of Manchester, {\sc Manchester, M13 9PL, United Kingdom}. E-mail: kees.vanschaik@manchester.ac.uk}}

\date{}
\maketitle

\begin{abstract} 

We introduce an algorithm for the pricing of finite expiry American options driven by L\'evy processes. The idea is to tweak Carr's `Canadisation' method, cf. Carr \cite{Carr98} (see also Bouchard et al \cite{Bouchard05}), in such a way that the adjusted algorithm is viable for any L\'evy process whose law at an independent, exponentially distributed time consists of a (possibly infinite) mixture of exponentials. This includes Brownian motion plus (hyper)exponential jumps, but also the recently introduced rich class of so-called meromorphic L\'evy processes, cf. Kyprianou et al \cite{Kuznetsov10}. This class contains all L\'evy processes whose L\'evy measure is an infinite mixture of exponentials which can generate both finite and infinite jump activity. L\'evy processes well known in mathematical finance can in a straightforward way be obtained as a limit of meromorphic L\'evy processes. We work out the algorithm in detail for the classic example of the American put, and we illustrate the results with some numerics.
\end{abstract}

\noindent
{\footnotesize Keywords: American options, optimal stopping, Canadisation, L{{\'e}}vy processes, meromorphic}\\
\noindent
{\footnotesize L{{\'e}}vy processes}

\noindent
{\footnotesize Mathematics Subject Classification (2010): 60G40, 60G51, 91G20} 


\section{Introduction}
\setcounter{equation}{0}

Let $X$ be a L\'evy process defined on a filtered
probability space $(\Omega,\mathcal{F},\mathbf{F},\mathbb{P})$, where $\mathbf{F}=(\mathcal{F}_t)_{t \geq 0}$ is the filtration generated by $X$ which is naturally enlarged (cf. Definition 1.3.38 in \cite{Bichteler02}). Recall that a L\'evy process is characterised by stationary, independent increments and paths which are right continuous and have left limits, and its law is characterised by the characteristic exponent $\Psi$ defined through $\mathbb{E}[e^{\mathrm{i} z X_t}]=e^{-t \Psi(z)}$ for all $t \geq 0$ and $z \in \mathbb{R}$. According to the L\'evy-Khintchine formula there exist $\sigma \geq 0$, $a \in \mathbb{R}$ and a measure $\Pi$ (the L\'evy measure) concentrated on $\mathbb{R} \setminus \{0\}$ satisfying $\int_{\mathbb{R}} (1 \wedge x^2) \, \Pi(dx)<\infty$ such that

\[ \Psi(z) = \frac{\sigma^2}{2} z^2 + \mathrm{i} az + \int_{\mathbb{R}} \left( 1-e^{\mathrm{i}zx}+\mathbf{1}_{\{ |x|<1 \}} \mathrm{i} zx \right) \, \Pi(dx) \]
for all $z \in \mathbb{R}$. The tuple $(\sigma,a,\Pi)$ is usually refered to as the L\'evy triplet. For a further introduction see e.g. the textbooks \cite{Bertoin96, Kyprianou06, Sato99}.

For $x \in \mathbb{R}$ we denote by $\mathbb{P}_x$ the law of $X$ when it is started at $x$ and we abbreviate $\mathbb{P}=\mathbb{P}_0$. Accordingly we shall write $\mathbb{E}_x$ and $\mathbb{E}$ for the
associated expectation operators. We denote by $\mathcal{T}$ the set of $\mathbf{F}$-stopping times.

Consider the following classic optimal stopping problem with expiry date $T \in [0,\infty)$:

\begin{equation}\label{def_stop_orig} 
v(T,x) = \sup_{\tau \in \mathcal{T}} \mathbb{E}_x \left[ e^{-r(\tau \wedge T)} f(X_{\tau \wedge T}) \right],
\end{equation}
where $f$ is a payoff function and $r \geq 0$ the discount rate. Such problems have been studied thoroughly and have applications in many fields, where option pricing theory in modern mathematical finance is maybe one of the most appealing ones. See e.g. the textbook \cite{Peskir06} for a recent overview of the general theory concerning optimal stopping problems, and e.g. \cite{Karat01} for an introduction to mathematical finance and explanation of the related terms used here. For example, if in a typical Black \& Scholes setup the price of the risky asset $S$ is assumed to evolve as $S_t=\exp(X_t)$ for all $t \geq 0$ while the price of the riskless asset $S^0$ is given by $S^0_t = e^{rt}$ for all $t \geq 0$ then $v$ can be interpreted\footnote{In addition it is required that $S/S^0$ is a martingale. If $X$ is not a Brownian motion there are typically multiple measures $Q$ equivalent to each other under which $S/S^0$ is a martingale. For this paper it is not relevant which measure is used, as long as $X$ is a L\'evy process under the chosen measure and $\mathbb{P}$ is understood to be the chosen measure} as the fair price of an American option on the risky asset which expires after $T$ time units and $x=X_0=\log S_0$. One of the classic examples is the American put option, which is characterised by the payoff function $f(x)=(K-e^x)^+$ for some $K>0$. Here $a^+:=\max\{a,0\}$. Such L\'evy process driven market models have received considerable attention in recent years, in an attempt to overcome some of the limitations of the classic Black \& Scholes model where $X$ is a Brownian motion. See e.g. \cite{Cont04} and the references therein.

It is well known that in general no closed form formula exists for (\ref{def_stop_orig}). Results do exist for certain combinations of payoff functions $f$ and L\'evy processes $X$ when $T=\infty$, but when $T<\infty$ --- as we assume throughout this paper --- one typically has to settle for approximating $v$ rather. 

In his celebrated paper \cite{Carr98} (see also \cite{Bouchard05}), Carr proposes a method for this he refers to as `Canadisation'. The idea is to introduce a `stochastic time grid', which is a grid where the distances between consecutive grid points forms a sequence of independent random variables with an identical exponential distribution. The grid gets finer as the common mean of the exponential distributions vanishes. The value of an American option with expiry date $T$ is then approximated by the value of the same American option but with the expiry date replaced by a grid point close to $T$. This approximating value function can be determined by performing a backwards induction over the grid points. Carr works out this algorithm for the American put option driven by a Brownian motion. See also \cite{Boyarchenko05} and the references therein for more general applications of Carr's algorithm.

As shown in \cite{Boyarchenko05} Carr's Canadisation is also viable for Brownian motion with exponential jumps. However, it does not seem easy to apply this algorithm to more general L\'evy processes due to the fact that the backwards induction over the grid points is involved and (hence) imposes restrictions. This paper introduces a variation of the Canadisation algorithm, where some changes to the original setup are made so that the backwards induction over the grid points becomes more straightforward. Indeed, the resulting adjusted algorithm allows to approximate $v$ by elementary functions not just for Brownian motion plus exponential jumps but for any so-called \emph{meromorphic} L\'evy process.  

The class of meromorphic L\'evy processes was recently introduced in \cite{Kuznetsov10} and can be defined as the family of all L\'evy processes whose L\'evy measure has a density that consists of an infinite mixture of exponentials, see Section \ref{sec_example} for further details and examples of such processes. This is a rich class, where in particular also infinite activity jumps are possible. It contains for instance the family of so-called $\beta$-processes (cf. \cite{Kuznetsov09}). Furthermore many L\'evy processes that play a prominent role in mathematical finance can in a straightforward way be obtained by considering limiting cases of meromorphic L\'evy processes, see \cite{Kuznetsov09} and \cite{Kuznetsov10}. Examples include generalized tempered stable processes (cf. \cite{Cont04}), Kobol processes (cf. \cite{Boyarchenko00} and \cite{boyarchenko02barrier}), tempered stable processes (cf. \cite{barndorff01}) and CGMY processes (cf. \cite{carr02}). 

In particular meromorphic L\'evy processes have the property that the law of the process evaluated at an independent, exponentially distributed time is an infinite mixture of exponentials (cf. Proposition \ref{prop_MProc}). (In fact meromorphic L\'evy processes are the only L\'evy processes for which this is the case.) Note that for a Brownian motion with hyperexponential jumps this law is a finite mixture of exponentials. This property is key in our algorithm, as becomes clear in Section \ref{sec_example}. Indeed, if this is the case every step in the algorithm can be worked out explicitly in terms of elementary functions provided the payoff function $f$ in (\ref{def_stop_orig}) can (piecewise) be expressed as a linear combination of functions of the form $A x^i e^{B x} + C$ for $A, B, C \in \mathbb{R}$ and $i \in \mathbb{N}$. This is for instance the case for the classic example of the American put option, where $f(x)=(K-e^x)^+$ for some $K>0$. For any payoff function not of this type it is a straightforward exercise to approximate it by a function of this type and estimate the error due to this approximation, cf. Remark \ref{rem1}.

Pricing American options driven by L\'evy processes has received considerable attention in recent years. Indeed several other possible techniques were developed, including (but not limited to) tree-based methods (see e.g. \cite{Maller06}) and variational methods (see e.g. \cite{matache2005}). See also the overview in \cite{levendorskii2006}. A detailed comparison of all available methods with the one proposed in this paper would constitute a paper in itself and we do not embark on such a task here. The authors feel the main appeal of the method presented in this paper lies in the fact that it is a `light weight' method which is probabilistic in nature and does not need any involved proofs or tools from other fields. Furthermore the computer implementation is rather straightforward as only elementary operations are required (together with the straightforward operation of numerical root finding). Finally the method is flexible and can easily be adjusted to deal with situations where for instance the option has a Bermudan type structure, i.e. where exercising is only allowed in certain subsets of $[0,T]$, or there is some path dependency in the payoff (for instance when the payoff also depends on the path supremum). Cf. Remark \ref{rem1}.

The rest of this paper is organised as follows. In Section \ref{sec_alg} we introduce the algorithm in detail and spend some time discussing the difference with the original Canadisation approach. In Section \ref{sec_Main} we make the algorithm rigorous. In Section \ref{sec_example} we work out the algorithm in detail for the prominent example of the American put, where the driving process is a meromorphic L\'evy process (or a Brownian motion plus (hyper)exponential jumps). In that section we also provide some more details concerning meromorphic L\'evy processes. We find that the functions approximating $v$ generated by our algorithm have easy to implement, explicit formulae. We conclude by presenting some numerical results in Section \ref{sec_num}.

\section{The algorithm and comparison with Canadisation}\label{sec_alg}

Let us introduce the algorithm in detail. For any $n \in \mathbb{N}$, enlarge the above probability space to contain a Poisson process $N^{(n)}$ with intensity $n$, independent of $X$. Denote the $k$-th jump time of $N^{(n)}$ by $T^{(n)}_k$, i.e.

\begin{equation}\label{def_T^n_k} 
T^{(n)}_k := \inf \{ t \geq 0 \, | \, N^{(n)}_t \geq k \} 
\end{equation}
 with $T^{(n)}_0:=0$. Let $\widetilde{\mathbf{F}}^{(n)}$ denote the naturally enlarged filtration generated by the pair $(X,N^{(n)})$. Note that for any $T \geq 0$, if $(k(n))_{n \geq 1}$ is a sequence of natural numbers such that $k(n)/n \to T$ as $n \to \infty$ then $T^{(n)}_{k(n)} \to T$ a.s. by the law of large numbers.

Furthermore, denote by $\widetilde{\mathcal{T}}^{(n)}$ the set of $\widetilde{\mathbf{F}}^{(n)}$-stopping times on this enlarged probability space that \emph{only take values on the grid points} $\{ 0=T^{(n)}_0, T^{(n)}_1, \ldots \}$, that is

\[ \widetilde{\mathcal{T}}^{(n)} := \left\{ \tau \, \left| \, \tau \mbox{ is an $\widetilde{\mathbf{F}}^{(n)}$-stopping time \& $\tau \in \{ 0=T^{(n)}_0, T^{(n)}_1, \ldots \}$} \right. \right\}. \]

Now consider modifying the original optimal stopping problem (\ref{def_stop_orig}) in three steps. First, replace the deterministic expiry date $T$ by the random variable $T^{(n)}_k$ (for suitably chosen $k$). Second, replace the set of stopping times $\mathcal{T}$ over which is optimized by $\widetilde{\mathcal{T}}^{(n)}$. Finally, replace the discount factor $e^{-r\tau}$ by $D^{(n)}(\tau)$ defined as

\begin{equation}\label{def_discnt}
D^{(n)}(\tau) := \sum_{i=0}^{\infty} \mathbf{1}_{\{ \tau=T^{(n)}_i \}} e^{-ri/n}.
\end{equation}
Together this amounts to defining for each $n \in \mathbb{N}$ and $k \in \mathbb{N}$:

\begin{equation}\label{def_stop_new} 
V_k^{(n)}(x) := \sup_{\tau \in \widetilde{\mathcal{T}}^{(n)}} \mathbb{E}_x \left[ D^{(n)}(\tau \wedge T^{(n)}_k) f(X_{\tau \wedge T^{(n)}_k}) \right].
\end{equation}

The usefulness of this setup relies on the following two facts. For any expiry date $T \geq 0$, choosing a sequence $(k(n))_{n \geq 1}$ such that $k(n)/n \to T$ as $n \to \infty$ we have for any $x \in \mathbb{R}$ that $V_{k(n)}^{(n)}(x) \to v(T,x)$ as $n \to \infty$ (cf. Theorem \ref{thm_main} (ii)). Furthermore, for any $n \in \mathbb{N}$ the sequence of functions $(V_k^{(n)})_{k \geq 0}$ satisfies the following recursion:

\begin{equation}\label{eq_recursion} 
V_0^{(n)}(x) = f(x), \quad V_k^{(n)}(x)=\max \left\{ f(x), e^{-r/n} \mathbb{E}_x \left[  V_{k-1}^{(n)}(X_{\xi^{(n)}}) \right] \right\} \mbox{ for $k \geq 1$,} 
\end{equation}
where $\xi^{(n)}$ is an exponentially distributed random variable with mean $1/n$, independent of $X$. This is easily seen by a dynamic programming argument. Indeed, consider (\ref{def_stop_new}) for some $x \in \mathbb{R}$. The available stopping times allow to stop either immediately or at $T^{(n)}_1$ or later. Stopping immediately yields a payoff of $f(x)$ while the (current) value of waiting until $T^{(n)}_1$ or later equals the discounted expected value of the option with expiry date $T^{(n)}_{k-1}$ for $x=X_{T^{(n)}_1}$. Hence $V_k^{(n)}(x)$ equals the maximum of the two.

The main point of this setup is that the recursion (\ref{eq_recursion}) is very straightforward. Indeed, as already alluded to in the Introduction, for any meromorphic L\'evy process the law of $X_{\xi^{(n)}}$ is a mixture of exponentials. As a consequence (\ref{eq_recursion}) can be worked out explicitly for example if $f(x)=(K-e^x)^+$, where first the expectation is computed explicitly and next the maximum is easily determined, cf. Section \ref{sec_example}.

Now let us for comparison recall the original Canadisation algorithm introduced by Carr. With $T^{(n)}_k$ as defined in (\ref{def_T^n_k}), define for all $n \in \mathbb{N}$ and $k \in \mathbb{N}$

\[ \widehat{V}^{(n)}_k(x) = \sup_{\tau} \mathbb{E}_x \left[ e^{-r(\tau \wedge T^{(n)}_k)} f(X_{\tau \wedge T^{(n)}_k}) \right], \]
where the supremum is taken over all $\widetilde{\mathbf{F}}^{(n)}$-stopping times. It can be shown (cf. also \cite{Bouchard05}) that this yields the following recursion:

\begin{eqnarray} 
 \widehat{V}^{(n)}_k(x) & = & \sup_{\tau \in \mathcal{T}} \mathbb{E}_x \left[ \mathbf{1}_{\{ \tau<\xi^{(n)} \}} e^{-r\tau} f(X_{\tau} ) + \mathbf{1}_{\{ \tau \geq \xi^{(n)} \}} e^{-r\xi^{(n)}}  \widehat{V}^{(n)}_{k-1}(X_{\xi^{(n)}}) \right] \nonumber\\
 & = & \sup_{\tau \in \mathcal{T}} \mathbb{E}_x \left[  e^{-(r+n)\tau} f(X_{\tau} ) + n \int_0^{\tau} e^{-(r+n)u}  \widehat{V}^{(n)}_{k-1}(X_{u}) \, \mbox{d}u \right]. \label{Carr1}
\end{eqnarray} 
To work out this recursion, general theory of optimal stopping (cf. e.g. \cite{Peskir06}) dictates that the optimal stopping time in (\ref{Carr1}) is the first time that $X$ enters a subset $\mathcal{S}$ of the state space $\mathbb{R}$, say $\tau_{\mathcal{S}}$. Hence, in order to find an explicit expression for (\ref{Carr1}) one requirement is to know the mean of $e^{-(r+n)\tau_{\mathcal{S}}} f(X_{\tau_{\mathcal{S}}} )$. Carr worked with Brownian motion in which case this quantity is available in closed form. However once $X$ has jumps this becomes much more involved. For meromorphic L\'evy processes expressions are available that could in principle be used when $\mathcal{S}$ is a halfline or an interval (cf. Theorem 3 and Theorem 5 in \cite{Kuznetsov10}) but these are quite involved and complicated to implement. If $\mathcal{S}$ consists of a union of intervals the situation becomes even much more complicated. Namely, for the overshoot the possibility that $X$ jumps between different disconnected parts of $\mathcal{S}^c$ (the complement of $\mathcal{S}$) would have to be taken into account. That is, for some $x \in \mathcal{S}^c$ the rhs of (\ref{Carr1}) would depend on the values the function $\widehat{V}^{(n)}_k$ takes in intervals that are part of $\mathcal{S}^c$ and that $X$ could reach by a jump without entering $\mathcal{S}$ first.

Furthermore, in the case of an American put, i.e. $f(x)=(K-e^x)^+$, it can be shown that $\mathcal{S}=(-\infty,x^*]$ for some $x^*$ (depending on $n$ and $k$) and it remains to determine $x^*$. For other payoff functions it may a priori very well not be clear at all what shape $\mathcal{S}$ exactly takes, which makes the above task even more challenging.

On the other hand, the recursion (\ref{eq_recursion}) generated by our algorithm is much more straightforward to implement. In the case of the American put it can a priori be shown that a level $x^*$ (depending on $n$ and $k$) exists such that (cf. Proposition \ref{prop_alg_put})

\[ V_k^{(n)}(x) = \begin{cases}
f(x) & \text{if $x \leq x^*$} \\
e^{-r/n} \mathbb{E}_x \left[  V_{k-1}^{(n)}(X_{\xi^{(n)}}) \right] & \text{if $x > x^*$}
\end{cases} \]
and working out this recursion is easily possible for any meromorphic L\'evy process. For other payoff functions where a priori it is not clear how the maximum in (\ref{eq_recursion}) works out it is still rather straightforward to let a computer do the work. The first step would be to determine an explicit formula for 

\begin{equation}\label{K1} 
x \mapsto e^{-r/n} \mathbb{E}_x \left[  V_{k-1}^{(n)}(X_{\xi^{(n)}}) \right]
\end{equation}
and then $V_k^{(n)}(x)$ is the maximum of (\ref{K1}) and $f(x)$. With formulae available for both (\ref{K1}) and $f$ this is rather straightforward to implement using a computer. See also Remark \ref{rem1} (ii).

\begin{remark} A natural question is whether it would not be better to use a classic deterministic grid rather than a stochastic grid. That is to say, setting $t_k=kT/n$ for $k=0,1,\ldots,n$ one could consider approximating $v$ by narrowing the set of stopping times over which is optimised to the set of stopping times taking values in $\{t_0,t_1,\ldots,t_n\}$ only. Denoting the resulting value function with expiry date $t_k$ by $U^{(n)}_k$ the sequence of functions $(U^{(n)}_k)_{k=0,1,\ldots,n}$ is determined by the recursion

\begin{equation}\label{eq_rec2} 
U_0^{(n)}(x) = f(x), \quad U_k^{(n)}(x)=\max \left\{ f(x), e^{-r/n} \mathbb{E}_x \left[ U_{k-1}^{(n)}(X_{1/n}) \right] \right\} \mbox{ for $k=1,\ldots,n$.} 
\end{equation}
The problem is that this recursion involves the law of $X_{1/n}$. Even if this is known (which generally is not the case except for a few notable exceptions) it is typically not of a friendly enough nature to work out the recursion (\ref{eq_rec2}) explicitly. Indeed consider for instance a Brownian motion.
\end{remark}

\section{Main result \& proof}\label{sec_Main}
\setcounter{equation}{0}

This section is dedicated to making the algorithm outlined in the above Section \ref{sec_alg} rigorous.

\begin{theorem}\label{thm_main} Let $f$ be a bounded and continuous function. Let the value function $v$ be given by (\ref{def_stop_orig}). For each $n \geq 1$ and $k \geq 0$ let the function $V^{(n)}_k$ be given by (\ref{def_stop_new}). We have the following.
\begin{itemize}
\item[(i)] For any $n \geq 1$, the sequence of functions $(V_k^{(n)})_{k \geq 0}$ satisfies the following recursion:

\begin{equation}\label{main_rec} 
V_0^{(n)}(x) = f(x), \quad V_k^{(n)}(x)=\max \left\{ f(x), e^{-r/n} \mathbb{E}_x \left[  V_{k-1}^{(n)}(X_{\xi^{(n)}}) \right] \right\} \mbox{ for $k \geq 1$,} 
\end{equation}
where $\xi^{(n)}$ is independent of $X$ and follows an exponential distribution with mean $1/n$.
\item[(ii)] For any $T \in [0,\infty)$, if $(k(n))_{n \geq 1}$ is a sequence such that $k(n)/n \to T$ as $n \to \infty$ then:

\[ T^{(n)}_{k(n)} \longrightarrow T \mbox{ a.s.} \quad \mbox{and} \quad V_{k(n)}^{(n)}(x) \longrightarrow v(T,x) \mbox{ for all $x \in \mathbb{R}$.} \]
\end{itemize}
\end{theorem}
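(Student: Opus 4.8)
\emph{Part (i).} The plan is a dynamic programming argument built on the strong Markov property of the pair $(X,N^{(n)})$ together with the memorylessness of the exponential law. The case $k=0$ is immediate since the only admissible stopping time bounded by $T^{(n)}_0=0$ is $\tau\equiv 0$, giving $V^{(n)}_0=f$. For $k\geq 1$, the crucial observation is that $\mathcal{F}_0$ of the enlarged filtration is trivial (up to null sets), so for any $\tau\in\widetilde{\mathcal{T}}^{(n)}$ the event $\{\tau=0\}\in\mathcal{F}_0$ is either a.s.\ true or a.s.\ false. Hence the supremum in (\ref{def_stop_new}) splits as the maximum of the value of stopping at once, namely $f(x)$, and the value over those $\tau$ with $\tau\geq T^{(n)}_1$ a.s. For the latter I would condition on $\mathcal{F}_{T^{(n)}_1}$: on $\{\tau\geq T^{(n)}_1\}$ the discount factor contributes exactly $e^{-r/n}$ from the first grid point, while the shifted grid $\{T^{(n)}_{j+1}-T^{(n)}_1\}_{j\geq 0}$ has, by the strong Markov property of the Poisson process, the law of a fresh grid; thus $\tau$ shifted by $T^{(n)}_1$ is an admissible stopping time for the $(k-1)$-problem started from $X_{T^{(n)}_1}$. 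Taking conditional expectations yields the continuation value $e^{-r/n}\mathbb{E}_x[V^{(n)}_{k-1}(X_{T^{(n)}_1})]$, and since $T^{(n)}_1$ is $\mathrm{Exp}(n)$ and independent of $X$ we have $X_{T^{(n)}_1}\stackrel{d}{=}X_{\xi^{(n)}}$, giving (\ref{main_rec}). The two inequalities ($\geq$ via an $\eps$-optimal continuation strategy, $\leq$ via the conditioning bound) are routine once this decomposition is in place.

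\emph{Part (ii), preliminaries.} The convergence $T^{(n)}_{k(n)}\to T$ a.s.\ follows from the strong law of large numbers: coupling all the grids through a single i.i.d.\ standard exponential sequence $(E_i)$ via $T^{(n)}_j=\tfrac1n\sum_{i\leq j}E_i$, one writes $T^{(n)}_{k(n)}=\tfrac{k(n)}{n}\cdot\tfrac1{k(n)}\sum_{i\leq k(n)}E_i\to T\cdot 1$. Since each $V^{(n)}_{k(n)}(x)$ is a deterministic number, this coupling is a legitimate device and I would retain it throughout to obtain a.s.\ statements for integrands and then pass to expectations by dominated convergence (legitimate as $f$ is bounded). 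For the value convergence I introduce, for a general horizon $S\geq 0$, the quantity $v(S,x)$ defined as in (\ref{def_stop_orig}) with $T$ replaced by $S$, and I establish the sandwich $\liminf_n V^{(n)}_{k(n)}(x)\geq v(T,x)$ and $\limsup_n V^{(n)}_{k(n)}(x)\leq v(T,x)$ separately. Two auxiliary facts will be used: first, the \emph{uniform discount estimate} that under the coupling $\max_{j\leq k(n)}|T^{(n)}_j-j/n|\to 0$ a.s.\ (partial sums of mean-zero increments over $\approx nT$ steps, scaled by $1/n$), whence $D^{(n)}(\sigma)$ may be replaced by $e^{-r\sigma}$ uniformly over $\sigma=\tau\wedge T^{(n)}_{k(n)}$ at a cost $\|f\|_\infty\,\mathbb{E}[\max_{j\leq k(n)}|e^{-rj/n}-e^{-rT^{(n)}_j}|]\to 0$; and second, that $S\mapsto v(S,x)$ is nondecreasing and right-continuous, the latter because for any stopping time the payoff difference between horizons $S$ and $S+\delta$ is bounded by $\mathbb{E}_x[\sup_{S\leq u\leq S+\delta}|e^{-ru}f(X_u)-e^{-rS}f(X_S)|]$, which tends to $0$ by right-continuity of $u\mapsto e^{-ru}f(X_u)$ and dominated convergence.

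\emph{Lower bound.} Fix $\eps>0$ and an $\eps$-optimal $\mathbf{F}$-stopping time $\tau^*\leq T$ for $v(T,x)$. I would approximate it by the first grid point at or after $\tau^*$, i.e.\ $\tau_n:=\inf\{T^{(n)}_j:T^{(n)}_j\geq\tau^*\}$, which lies in $\widetilde{\mathcal{T}}^{(n)}$. Under the coupling the mesh near $\tau^*$ is at most $\tfrac1n\max_{i\leq k(n)}E_i\to 0$ a.s., so $\tau_n\wedge T^{(n)}_{k(n)}\to\tau^*$ a.s.; by right-continuity of $X$ (and, in the boundary case $\tau^*=T$, by stochastic continuity of the Lévy process at the fixed time $T$, which rules out a fixed-time jump) we get $X_{\tau_n\wedge T^{(n)}_{k(n)}}\to X_{\tau^*}$ a.s. Combining this with the uniform discount estimate and dominated convergence shows that the payoff along $\tau_n$ converges to $\mathbb{E}_x[e^{-r\tau^*}f(X_{\tau^*})]\geq v(T,x)-\eps$, and letting $\eps\downarrow 0$ gives the claim.

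\emph{Upper bound and main obstacle.} This is the delicate direction, where the random horizon and the external randomization must be disentangled. Fix $\delta>0$. After the uniform discount replacement, I split any payoff over $\tau\in\widetilde{\mathcal{T}}^{(n)}$ on the event $\{T^{(n)}_{k(n)}\leq T+\delta\}$ and its complement; the complement contributes at most $2\|f\|_\infty\,\mathbb{P}(T^{(n)}_{k(n)}>T+\delta)\to 0$. On the good event $\sigma:=\tau\wedge T^{(n)}_{k(n)}$ satisfies $\sigma\leq T+\delta$, so it is an $\widetilde{\mathbf{F}}^{(n)}$-stopping time bounded by the \emph{deterministic} horizon $T+\delta$. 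At this point I invoke \emph{randomization invariance}: since $N^{(n)}$ is independent of $X$, conditioning on the whole Poisson path turns $\sigma$ into an $\mathbf{F}$-stopping time for $X$ while preserving the law of $X$, so the conditional value is at most $v(T+\delta,x)$, and averaging gives $\mathbb{E}_x[e^{-r\sigma}f(X_\sigma)]\leq v(T+\delta,x)$. Taking the supremum over $\tau$ and then $n\to\infty$ yields $\limsup_n V^{(n)}_{k(n)}(x)\leq v(T+\delta,x)$, and letting $\delta\downarrow 0$ with the right-continuity of $v(\cdot,x)$ closes the argument. I expect the main difficulty to be precisely this step: handling the randomness of the horizon $T^{(n)}_{k(n)}$ cleanly and justifying that the independent Poisson enrichment of the filtration cannot improve the optimal stopping value, so that the enlarged, grid-constrained problem can be compared with the original problem $v$.
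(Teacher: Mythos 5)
Your part (i), your lower bound, and your treatment of the discount replacement all track the paper's proof closely (the paper handles the discount swap as a separate ``Step 3'' via H\"older and a variance computation on $T^{(n)}_i$, while you get it from an a.s.\ maximal estimate under a coupling; the paper also first reduces to a finitely-valued $\eps$-optimal stopping time before projecting onto the grid, whereas you project a general $\eps$-optimal $\tau^*$ directly and handle the boundary case $\tau^*=T$ via the absence of fixed jump times --- both work). The genuine divergence is in the upper bound. The paper does not argue directly: it introduces Carr's Canadised functions $U^{(n)}_k$ defined by the recursion (\ref{27oct2}), proves $\hat{V}^{(n)}_k\leq U^{(n)}_k$ by induction on the two recursions (the supremum in (\ref{27oct2}) dominates both the choices $\tau=0$ and $\tau=\infty$), and then imports the convergence $U^{(n)}_{k(n)}(x)\to v(T,x)$ as a black box from Bouchard--El Karoui--Touzi. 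You instead give a self-contained argument: truncate at the deterministic horizon $T+\delta$ on the high-probability event $\{T^{(n)}_{k(n)}\leq T+\delta\}$, invoke the fact that stopping times of the independently enlarged filtration cannot beat $v(T+\delta,x)$, and close with right-continuity of $S\mapsto v(S,x)$. This buys independence from the external reference at the cost of two lemmas the paper never has to touch: the randomisation-invariance step (the section $\omega\mapsto\sigma(\omega,\omega')$ at a fixed Poisson path $\omega'$ is an $\mathbf{F}$-stopping time because $\{\sigma\leq t\}\in\mathcal{F}_t\otimes\mathcal{F}^{N}_\infty$, and independence makes the conditional law of $X$ unchanged --- correct, but the natural enlargement/completion of the filtrations means the section argument needs a modification-by-null-sets step that you should spell out) and the right-continuity of $v(\cdot,x)$, for which your estimate $\sup_{S\leq u\leq S+\delta}|e^{-ru}f(X_u)-e^{-rS}f(X_S)|\to 0$ is sound given c\`adl\`ag paths and bounded continuous $f$. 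I regard your route as correct; it is arguably more transparent about where the comparison with the original problem $v$ actually happens, while the paper's route is shorter on the page because the analytic work is delegated to \cite{Bouchard05}.
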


\begin{proof} Ad (i). This is just the dynamic programming principle applied to (\ref{def_stop_new}). Indeed for any $k \geq 1$

\begin{eqnarray*}
V_k^{(n)}(x) & = & \sup_{\tau \in \widetilde{\mathcal{T}}^{(n)}} \mathbb{E}_x \left[ \left( \mathbf{1}_{\{ \tau=0 \}} + \mathbf{1}_{\{ \tau \geq T^{(n)}_1 \}} \right) D^{(n)}(\tau \wedge T^{(n)}_k) f(X_{\tau \wedge T^{(n)}_k}) \right] \nonumber\\
 & = & \sup_{\tau \in \widetilde{\mathcal{T}}^{(n)}} \mathbb{E}_x \left[  \mathbf{1}_{\{ \tau=0 \}} f(x) + \mathbf{1}_{\{ \tau \geq T^{(n)}_1 \}}  \mathbb{E}_x \left[ \left. D^{(n)}(\tau \wedge T^{(n)}_k) f(X_{\tau \wedge T^{(n)}_k}) \, \right| \, \mathcal{F}_{T^{(n)}_1} \right] \right] \nonumber\\
 & = & \sup_{\tau \in \widetilde{\mathcal{T}}^{(n)}} \mathbb{E}_x \left[  \mathbf{1}_{\{ \tau=0 \}} f(x) + \mathbf{1}_{\{ \tau \geq T^{(n)}_1 \}} e^{-r/n}  \mathbb{E}_x \left[  V_{k-1}^{(n)}(X_{\xi^{(n)}}) \right] \right] \nonumber\\
 & = & \max \left\{ f(x), e^{-r/n} \mathbb{E}_x \left[  V_{k-1}^{(n)}(X_{\xi^{(n)}}) \right] \right\}. \nonumber
\end{eqnarray*}

Ad (ii). $T^{(n)}_{k(n)} \rightarrow T$ a.s. as $n \to \infty$ is just an application of the law of the large numbers. For proving that $V_{k(n)}^{(n)}(x) \to v(T,x)$ as $n \to \infty$ for any $x \in \mathbb{R}$ we have to do a bit more work. In this proof we will also need an auxiliary sequence of functions that is generated by our algorithm if we leave out the step of adjusting the discounting factor, that is, let for $n \geq 1, k \geq 0$ the function $\hat{V}^{(n)}_k$ be given by (compare with (\ref{def_stop_new})):

\begin{equation}\label{def_hatV}
\hat{V}_k^{(n)}(x) = \sup_{\tau \in \widetilde{\mathcal{T}}^{(n)}} \mathbb{E}_x \left[ e^{-r(\tau \wedge T^{(n)}_k)} f(X_{\tau \wedge T^{(n)}_k}) \right]. 
\end{equation}
Take any $x \in \mathbb{R}$. The result follows from the following three steps.\newline
Step 1:
\begin{equation}\label{res_step1}
\limsup_{n \to \infty} \hat{V}_{k(n)}^{(n)}(x) \leq v(T,x). 
\end{equation}
Step 2:
\begin{equation}\label{res_step2}
\liminf_{n \to \infty} \hat{V}_{k(n)}^{(n)}(x) \geq v(T,x).
\end{equation}
Step 3:
\begin{equation}\label{res_step3}
\lim_{n \to \infty} \left| \hat{V}_{k(n)}^{(n)}(x) - V_{k(n)}^{(n)}(x) \right| =0.
\end{equation}

\emph{Step 1.} We appeal to Carr's Canadisation as outlined in Section \ref{sec_alg}, however we follow the setup from \cite{Bouchard05}. Define for $n \geq 1$ the sequence of functions $(U^{(n)}_k)_{k \geq 0}$ as follows (recall that $\mathcal{T}$ is the set of stopping times with respect to the completed filtration generated by $X$):

\begin{equation}\label{27oct2}
 U_0^{(n)}(x) = f(x), \quad U_k^{(n)}(x)= \sup_{\tau \in \mathcal{T}} \mathbb{E}_x \left[ e^{-(r+n)\tau} f(X_{\tau}) + n \int_0^{\tau} e^{-(r+n)u} U_{k-1}^{(n)}(X_u) \, \mbox{d}u \right] \mbox{ for $k \geq 1$.} 
\end{equation}
It is shown in \cite{Bouchard05} that 

\begin{equation}\label{conv_ElKar} 
U_{k(n)}^{(n)}(x) \to v(T,x) \quad \mbox{as $n \to \infty$.} 
\end{equation}
By a dynamic programming argument the sequence $(\hat{V}_{k}^{(n)})_{k \geq 0}$ as defined in (\ref{def_hatV}) satisfies the recursion

\[ \hat{V}_0^{(n)}(x) = f(x), \quad \hat{V}_{k}^{(n)}(x) = \max \left\{ f(x), \mathbb{E}_x \left[ e^{-r \xi^{(n)}} \hat{V}_{k-1}^{(n)}(X_{\xi^{(n)}}) \right] \right\} \mbox{ for $k \geq 1$,} \]
where $\xi^{(n)}$ is an exponentially distributed random variable with mean $1/n$, independent of $X$. Comparing this recursion with (\ref{27oct2}), we see that $\hat{V}_{k-1}^{(n)} \leq U_{k-1}^{(n)}$ implies $\hat{V}_{k}^{(n)} \leq U_{k}^{(n)}$, since in (\ref{27oct2}) the supremum is at least as large as taking either $\tau=0$ or $\tau=\infty$. Since $\hat{V}_0^{(n)}=U_0^{(n)}=f$ it follows that $\hat{V}_{k}^{(n)} \leq U_{k}^{(n)}$ for all $k,n$, which together with (\ref{conv_ElKar}) indeed yields (\ref{res_step1}).

\emph{Step 2.} Let $\eps>0$ be arbitrary and let us show that

\begin{equation}\label{tmp4}
\liminf_{n \to \infty} \hat{V}_{k(n)}^{(n)}(x) \geq v(T,x)-\eps.
\end{equation}
For this, first note that since $X$ is continuous over stopping times there exists a $\tau'$ taking values in $0=t_0<t_1<\ldots<t_N=T$ such that

\begin{equation}\label{tmp1} 
\mathbb{E}_x \left[ e^{-r\tau' } f(X_{\tau' }) \right] \geq v(T,x)-\eps.
\end{equation}
Define an approximating sequence of stopping times for $\tau'$ by setting for any $n \geq 1$

\[ \widetilde{\mathcal{T}}^{(n)} \ni \sigma^{(n)} := \inf \left\{  T^{(n)}_k \, | \, T^{(n)}_k \geq \tau' \right\}. \]
Now we claim that as $n \to \infty$:

\begin{equation}\label{tmp2}
\mathbb{E}_x \left[ e^{-r\sigma^{(n)}} f(X_{\sigma^{(n)}}) \right] \longrightarrow \mathbb{E}_x \left[ e^{-r\tau'} f(X_{\tau'}) \right]
\end{equation}
and

\begin{equation}\label{tmp3}
\mathbb{E}_x \left[ \left| e^{-r\sigma^{(n)}} f(X_{\sigma^{(n)}}) - e^{-r(\sigma^{(n)} \wedge T^{(n)}_{k(n)})} f(X_{\sigma^{(n)}  \wedge T^{(n)}_{k(n)}}) \right| \right] \longrightarrow 0.
\end{equation}
Using this we indeed arrive at (\ref{tmp4}):

\begin{eqnarray}
 \liminf_{n \to \infty} \hat{V}_{k(n)}^{(n)}(x) & \stackrel{(\ref{def_hatV})}{\geq} &  \liminf_{n \to \infty} \mathbb{E}_x \left[ e^{-r(\sigma^{(n)} \wedge T^{(n)}_{k(n)})} f(X_{\sigma^{(n)} \wedge T^{(n)}_{k(n)}}) \right] \nonumber\\
 & \stackrel{(\ref{tmp3})}{=} & \liminf_{n \to \infty} \mathbb{E}_x \left[ e^{-r\sigma^{(n)}} f(X_{\sigma^{(n)}}) \right] \nonumber\\
 & \stackrel{(\ref{tmp2})}{=} & \mathbb{E}_x \left[ e^{-r\tau'} f(X_{\tau'}) \right] \nonumber\\
 & \stackrel{(\ref{tmp1})}{\geq} & v(T,x)-\eps, \nonumber
\end{eqnarray}
so let us prove (\ref{tmp2}) \& (\ref{tmp3}).

Ad (\ref{tmp2}): By construction of $\tau'$ and $\sigma^{(n)}$ we have

\[ \mathbb{E}_x \left[ e^{-r\tau'} f(X_{\tau'}) \right] - \mathbb{E}_x \left[ e^{-r\sigma^{(n)}} f(X_{\sigma^{(n)}}) \right] = \sum_{i=0}^N \mathbb{E}_x \left[ \mathbf{1}_{\{ \tau'=t_i \}} \left( e^{-r t_i} f(X_{t_i}) -  e^{-r\sigma^{(n)}_i} f(X_{\sigma^{(n)}_i}) \right) \right], \]
where 

\begin{equation}\label{tmp7}
 \sigma^{(n)}_i := \inf \left\{  T^{(n)}_k \, | \, T^{(n)}_k \geq t_i \right\}, 
\end{equation}
so it is enough to show that for each $i=0,\ldots,N$

\begin{equation}\label{tmp6} 
e^{-r\sigma^{(n)}_i} f(X_{\sigma^{(n)}_i}) \stackrel{L^1}{\longrightarrow} e^{-r t_i} f(X_{t_i}) \quad \mbox{as $n \to \infty$}. 
\end{equation}
Since $f$ is continuous and bounded, say by $M$, we have

\begin{eqnarray}
 \left| e^{-r\sigma^{(n)}_i} f(X_{\sigma^{(n)}_i}) - e^{-r t_i} f(X_{t_i}) \right| & \leq & \left| e^{-r\sigma^{(n)}_i} f(X_{\sigma^{(n)}_i}) - e^{-r t_i} f(X_{\sigma^{(n)}_i}) \right| \nonumber\\
  & & \quad + \left| e^{-r t_i} f(X_{\sigma^{(n)}_i}) - e^{-r t_i} f(X_{t_i}) \right| \nonumber\\
 & \leq & M \left( e^{-r\sigma^{(n)}_i}-e^{-r t_i} \right) + e^{-r t_i} \left|  f(X_{\sigma^{(n)}_i}) - f(X_{t_i}) \right| \nonumber 
\end{eqnarray}
so that by uniform integrability and the continuous mapping theorem we have that (\ref{tmp6}) follows from 

\begin{equation}\label{tmp8} 
\sigma^{(n)}_i \stackrel{\mathbb{P}_x}{\longrightarrow} t_i \quad \mbox{and} \quad X_{\sigma^{(n)}_i}  \stackrel{\mathbb{P}_x}{\longrightarrow} X_{t_i} \quad \mbox{as $n \to \infty$}. 
\end{equation}
Now, recalling (\ref{tmp7}) and that $T^{(n)}_k$ is a sum of i.i.d. exponentials with parameter $n$, due to the lack of memory property $\sigma^{(n)}_i-t_i$ is equal in distribution to an exponentially distributed random variable with mean $1/n$, independent of $X$, say $\xi^{(n)}$. So the first part of (\ref{tmp8}) is obvious and the second part is a direct consequence of stochastic continuity of $X$.

Ad (\ref{tmp3}): as above, it suffices to show

\begin{equation}\label{21nov1} 
\sigma^{(n)} - (\sigma^{(n)}  \wedge T^{(n)}_{k(n)}) \stackrel{\mathbb{P}_x}{\longrightarrow} 0 \quad \mbox{and} \quad X_{\sigma^{(n)}} - X_{\sigma^{(n)}  \wedge T^{(n)}_{k(n)}} \stackrel{\mathbb{P}_x}{\longrightarrow} 0 \quad \mbox{as $n \to \infty$}. 
\end{equation}
For the first one, for any $\delta>0$ we have

\begin{eqnarray}
\mathbb{P}_x \left( \left| \sigma^{(n)} - (\sigma^{(n)}  \wedge T^{(n)}_{k(n)}) \right| \geq \delta \right) & = & \mathbb{P}_x \left( \sigma^{(n)} \geq T^{(n)}_{k(n)} + \delta \right) \nonumber\\
& = & \mathbb{P}_x \left( \sigma^{(n)} \geq T^{(n)}_{k(n)} + \delta \quad \& \quad T^{(n)}_{k(n)} \leq T-\delta/2 \right) \nonumber\\
 & & \quad + \mathbb{P}_x \left( \sigma^{(n)} \geq T^{(n)}_{k(n)} + \delta \quad \& \quad T^{(n)}_{k(n)} > T-\delta/2 \right) \nonumber\\
& \leq & \mathbb{P}_x \left( T^{(n)}_{k(n)} \leq T-\delta/2 \right) + \mathbb{P}_x \left( \sigma^{(n)} \geq T + \delta/2 \right). \nonumber
\end{eqnarray}
The first probability on the last line vanishes as $n \to \infty$ on account of $T^{(n)}_{k(n)} \to T$ a.s. To see that the second probability vanishes as well, note that by construction $\tau' \leq T$ so that by definition of $\sigma^{(n)}$ the difference $\sigma^{(n)}-T$ is bounded above by $\inf \{ T^{(n)}_{k} \, | \, T^{(n)}_{k} \geq T \}-T$, but again by the lack of memory property this random variable follows an exponential distribution with mean $1/n$.

Finally, the second statement in (\ref{21nov1}) follows as above from the first statement and stochastic continuity of $X$.


\emph{Step 3.} Some standard manipulations, using H\"olders inequality and the boundedness of $f$, show that the definitions (\ref{def_hatV}) and (\ref{def_stop_new}) yield:

\begin{eqnarray}
\left| \hat{V}_{k(n)}^{(n)}(x) - V_{k(n)}^{(n)}(x) \right| & \leq & M \sup_{\tau \in \widetilde{\mathcal{T}}^{(n)}} \sum_{i=0}^{k(n)} \mathbb{E}_x \left[ \mathbf{1}_{\{ \tau= T^{(n)}_{i} \}} \left| e^{-ri/n} - e^{-r T^{(n)}_{i}} \right| \right] \nonumber\\
& \leq & M \sup_{\tau \in \widetilde{\mathcal{T}}^{(n)}} \sum_{i=0}^{k(n)} \mathbb{E}_x \left[ \mathbf{1}_{\{ \tau= T^{(n)}_{i} \}} \right]^{1/2} \mathbb{E}_x \left[ \left( e^{-ri/n} - e^{-r T^{(n)}_{i}} \right)^2 \right]^{1/2}. \label{27oct1}
\end{eqnarray} 
Recalling that $T^{(n)}_{i}$ is the sum of $i$ i.i.d. exponentials with mean $1/n$, it is straightforward to compute

\[ \mathbb{E}_x \left[ \left( e^{-ri/n} - e^{-r T^{(n)}_{i}} \right)^2 \right] = \frac{n^i}{(n+2r)^i} -2e^{-ri/n} \frac{n^i}{(n+r)^i} + e^{-2ri/n}. \]
As this expression is increasing in $i$ the supremum in (\ref{27oct1}) above is attained by $\tau=T^{(n)}_{k(n)}$. Plugging this in it is clear from the vanishing variance of $T^{(n)}_{k(n)}$ (or directly from the above display) that (\ref{27oct1}) indeed vanishes as $n \to \infty$.

\end{proof}

\begin{remark}\label{thm_main_ref} The assumption in the above Theorem \ref{thm_main} that $f$ is bounded and continuous keeps the proof compact. However the boundedness can be weakened to a more liberal integrability condition, and continuity can be weakened to (at least) a discrete set of discontinuities --- provided compound Poisson processes are excluded. Details are left to the reader. 
\end{remark}


\section{Example: the American put driven by a meromorphic L\'evy process}\label{sec_example}
\setcounter{equation}{0}

In this section we work out in detail our algorithm for an American put option driven by a meromorphic L\'evy process. This is probably the most classic example of an option in a Black \& Scholes type financial market. If the option has strike price $K>0$ and the price $S$ of the risky asset on which the option is written is assumed to evolve as $S_t=\exp(X_t)$ for $t \geq 0$ then the payoff function is given by

\begin{equation}\label{def_payoff_put}
f(x)=(K-e^x)^+.
\end{equation}

Let us first briefly discuss meromorphic L\'evy processes in more detail, see also \cite{Kuznetsov10}. A function $f: \mathbb{R}_{>0} \to \mathbb{R}$ is said to be discrete completely monotone if 

\begin{equation}\label{eq_dcm} 
f(x) = \sum_{m \geq 1} a_m e^{-b_m x} \quad \text{for all $x>0$,} 
\end{equation}
where $a_m > 0$, $b_m \geq 0$ for all $m \geq 1$ and $b_m \to \infty$ as $m \to \infty$. The definition is as follows.

\begin{definition}\label{def_MProc} A L\'evy process $X$ is meromorphic if the functions $x \mapsto \overline{\Pi}^+(x)=\Pi((x,\infty))$ and $x \mapsto \overline{\Pi}^-(x)=\Pi((-\infty,-x))$ are both discrete completely monotone.
\end{definition}
Alternatively, this is any L\'evy process whose L\'evy measure consists of an infinite mixture of exponentials on both the negative and positive halfline (cf. equation (5) in \cite{Kuznetsov10}). As mentioned in the Introduction, the key property of a meromorphic L\'evy process $X$  that makes our algorithm yield explicit expressions is the fact that the law of $X$ evaluated at an independent exponentially distributed random variable $\xi^{(q)}$ with mean $1/q$ is a mixture of exponentials:

\begin{equation}\label{dens_X} 
\mathbb{P} \left( X_{\xi^{(q)}} \in \mathrm{d}x \right) = \left( \mathbf{1}_{\{ x < 0 \}}  \sum_{j=0}^{N_{+}} c^{(q)}_{+}(j) e^{\zeta_{+}^{(q)}(j) x} + \mathbf{1}_{\{ x > 0 \}} \sum_{j=0}^{N_{-}} c^{(q)}_{-}(j) e^{\zeta_{-}^{(q)}(j) x} \right) \, \mathrm{d}x
\end{equation}
for $N_{-}, N_{+} \in \mathbb{N} \cup \{ \infty \}$ and sequences $(c^{(q)}_{-}(j)), (c^{(q)}_{+}(j)), (\zeta_{-}^{(q)}(j)), (\zeta_{+}^{(q)}(j))$, where for all (relevant) $j$: $c^{(q)}_{-}(j)>0$,  $c^{(q)}_{+}(j)>0$, $\zeta_{-}^{(q)}(j)<0$, $\zeta_{+}^{(q)}(j)>0$; and 

\[ \sum_{j \geq 0} \left( \frac{c^{(q)}_{+}(j)}{\zeta_{+}(j)} - \frac{c^{(q)}_{-}(j)}{\zeta_{-}(j)} \right)=1. \]
Note that $N_{-}= N_{+}=0$ corresponds to Brownian motion (with drift), while $N_{-}, N_{+} \in \mathbb{N}$ corresponds to a Brownian motion with drift plus (one- or two-sided) so-called hyper-exponential jumps (a generalisation of exponential jumps). See e.g. \cite{Kou02b} and \cite{Asmussen04}. For $N_{-}= N_{+}=\infty$ this expression for the density holds if and only if $X$ is a meromorphic L\'evy process (the proof relies heavily on the results in \cite{Kuznetsov10b}).

\begin{proposition}\label{prop_MProc} Let $X$ be a L\'evy process which is not compound Poisson. Then $X$ is a meromorphic L\'evy process if and only if $X$ satisfies (\ref{dens_X}) for some (and then all) $q>0$. Furthermore, if $X$ indeed satisfies (\ref{dens_X}), then $(\zeta_{-}^{(q)}(j))_{j \geq 0}$ (resp. $(\zeta_{+}^{(q)}(j))_{j \geq 0}$) is the sequence of all real negative (resp. real positive) zeros of the function  $z \mapsto q+\Psi(\mathrm{i}z)$ (or, equivalently, of $z \mapsto \Phi(-z)-q$), and  for all $j \geq 0$

\[ c^{(q)}_{+}(j) = \frac{q}{-\Phi'(-\zeta_{+}^{(q)}(j))}, \quad c^{(q)}_{-}(j) = \frac{q}{\Phi'(-\zeta_{-}^{(q)}(j))}. \]
Here $\Psi$ is the characteristic exponent of $X$ and $\Phi$ the Laplace exponent.
\end{proposition}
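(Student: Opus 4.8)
The plan is to compute the characteristic function of $X_{\xi^{(q)}}$ explicitly and then read off the density by residue calculus, with the meromorphic structure of $q+\Psi$ supplying the facts needed to justify the inversion. Conditioning on the exponential time and using $\mathbb{E}[e^{\mathrm{i}zX_t}]=e^{-t\Psi(z)}$ gives at once
\[ \mathbb{E}\left[ e^{\mathrm{i}zX_{\xi^{(q)}}} \right] = \int_0^\infty q e^{-qt} e^{-t\Psi(z)}\,\mathrm{d}t = \frac{q}{q+\Psi(z)}, \qquad z \in \mathbb{R}. \]
Hence the density in (\ref{dens_X}) is the Fourier inverse of $z\mapsto q/(q+\Psi(z))$, and its exponential rates are located at the poles of this function, that is at the zeros of $q+\Psi$. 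Since $\Psi(\mathrm{i}z)=-\Phi(-z)$, these zeros correspond exactly to the solutions of $\Phi(\theta)=q$, which already yields the stated identification of the $\zeta_{\pm}^{(q)}(j)$ once one knows that the relevant poles are precisely of this form.

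Establishing the latter is where I would invoke the theory of meromorphic L\'evy processes from \cite{Kuznetsov10} (resting in turn on \cite{Kuznetsov10b}): for meromorphic $X$ the map $q+\Psi$ extends to a meromorphic function whose zeros, viewed as those of $z\mapsto q+\Psi(\mathrm{i}z)$, are real and simple, interlacing with its poles. Simplicity of the zeros is what forces the inversion to produce pure exponentials $e^{\zeta_{\pm}^{(q)}(j)x}$ rather than terms $x^i e^{\zeta x}$, and the interlacing underwrites the convergence of the residue series. The assumption that $X$ is not compound Poisson enters here to exclude an atom of $X_{\xi^{(q)}}$ at $0$, guaranteeing that the law is absolutely continuous of the stated form.

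The coefficients then come out of a routine residue computation. Writing
\[ p(x) = \frac{1}{2\pi}\int_{-\infty}^{\infty} e^{-\mathrm{i}zx}\,\frac{q}{q+\Psi(z)}\,\mathrm{d}z, \]
I would close the contour in the upper half-plane for $x<0$ and in the lower half-plane for $x>0$, collecting residues at the simple poles $z_0$ with $q+\Psi(z_0)=0$, namely $z_0=\mathrm{i}\zeta_{+}^{(q)}(j)$ and $z_0=\mathrm{i}\zeta_{-}^{(q)}(j)$ respectively. Each residue of $q/(q+\Psi)$ equals $q/\Psi'(z_0)$, and inserting $\Psi'(z)=-\mathrm{i}\Phi'(\mathrm{i}z)$ the spurious factors of $\mathrm{i}$ cancel against the orientation of the contour to leave precisely $c_{+}^{(q)}(j)=q/(-\Phi'(-\zeta_{+}^{(q)}(j)))$ and $c_{-}^{(q)}(j)=q/(\Phi'(-\zeta_{-}^{(q)}(j)))$. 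The normalisation $\sum_{j\ge 0}(c_{+}^{(q)}(j)/\zeta_{+}(j)-c_{-}^{(q)}(j)/\zeta_{-}(j))=1$ then drops out of $\int_{\mathbb{R}}p=1$ as a consistency check, and since being meromorphic is a property of the L\'evy triplet alone, validity of (\ref{dens_X}) for one $q>0$ is equivalent to validity for all $q>0$.

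The main obstacle I expect is the converse implication, that (\ref{dens_X}) for a non-compound-Poisson process forces $X$ to be meromorphic. This cannot be reached by the inversion above; instead one must reconstruct the L\'evy tails $\overline{\Pi}^{\pm}$ from the law of $X_{\xi^{(q)}}$ and verify they are discrete completely monotone, which is exactly the delicate correspondence supplied by \cite{Kuznetsov10b}. I would cite that result rather than reprove it.
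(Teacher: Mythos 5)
Your forward direction takes a genuinely different, more computational route than the paper. The paper simply quotes Theorem 2 of \cite{Kuznetsov10} for the implication ``meromorphic $\Rightarrow$ (\ref{dens_X})'' together with the identification of the $\zeta_{\pm}^{(q)}(j)$ and $c_{\pm}^{(q)}(j)$, whereas you sketch a proof of that cited result from scratch: $\mathbb{E}[e^{\mathrm{i}zX_{\xi^{(q)}}}]=q/(q+\Psi(z))$, contour closing, and residues at the simple zeros of $q+\Psi(\mathrm{i}\,\cdot\,)$. Your bookkeeping is consistent (the orientation factors $\pm 2\pi\mathrm{i}$ against $\Psi'(z)=-\mathrm{i}\Phi'(\mathrm{i}z)$ do produce $c_{+}^{(q)}(j)=q/(-\Phi'(-\zeta_{+}^{(q)}(j)))$ and $c_{-}^{(q)}(j)=q/\Phi'(-\zeta_{-}^{(q)}(j))$), but note that the analytic facts you must import to make it rigorous --- reality and simplicity of the zeros, interlacing with the poles, convergence of the residue series --- are precisely the content of the theorem the paper cites, so you gain transparency at the cost of essentially re-deriving the reference.

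For the converse your proposal has a concrete misdirection. You plan to reconstruct the L\'evy tails $\overline{\Pi}^{\pm}$ from the law of $X_{\xi^{(q)}}$ and check they are discrete completely monotone, attributing the needed correspondence to \cite{Kuznetsov10b}; recovering the L\'evy measure from the law at an exponential time is a genuinely delicate inverse problem and is not what the available characterisation supplies. The paper's route is much shorter and avoids the L\'evy measure entirely: Theorem 1 of \cite{Kuznetsov10} characterises meromorphic processes (excluding compound Poisson) by the property that the \emph{distribution} tails $x\mapsto\mathbb{P}(X_{\xi^{(q)}}>x)$ and $x\mapsto\mathbb{P}(X_{\xi^{(q)}}<-x)$ are discrete completely monotone, and this hypothesis follows from (\ref{dens_X}) by a one-line integration, e.g.\ $\mathbb{P}(X_{\xi^{(q)}}>x)=\sum_{j}\bigl(-c_{-}^{(q)}(j)/\zeta_{-}^{(q)}(j)\bigr)e^{\zeta_{-}^{(q)}(j)x}$ with positive coefficients and rates $-\zeta_{-}^{(q)}(j)$ increasing to infinity. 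You should replace your reconstruction step with this direct verification and cite Theorem 1 of \cite{Kuznetsov10} rather than \cite{Kuznetsov10b}; as written, that step of your argument does not go through.
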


\begin{proof} 
The `only if' part can be found in Kuznetsov et al. \cite{Kuznetsov10}, Theorem 2. Let us now assume that $X$ satisfies (\ref{dens_X}) for some (and then all) $q>0$. By using Theorem 1 in Kuznetsov et al. \cite{Kuznetsov10}, it is sufficient to show that the functions $x \mapsto \mathbb{P}(X_{e(q)}>x)$ and $x \mapsto \mathbb{P}(X_{e(q)}<-x)$ are discrete completely monotone functions on $\mathbb{R}_{>0}$ (recall (\ref{eq_dcm})). Now
\[
\mathbb{P}(X_{\xi^{(q)}}>x)=\int \limits_{x}^{\infty} \sum_{j=0}^{N_{-}} c^{(q)}_{-}(j) \exp({\zeta_{-}^{(q)}(j) x}) \,\mathrm{d}x
=\sum_{j=0}^{N_{-}} - \frac{c^{(q)}_{-}(j)}{\zeta_{-}^{(q)}(j)} \exp(\zeta_{-}^{(q)}(j) x)
\]
If we now take $a_{j}=- \frac{c^{(q)}_{-}(j)}{\zeta_{-}^{(q)}(j)}>0$ and $b_{j}=-\zeta_{-}^{(q)}(j) > 0$ which is in fact increasing we see that indeed $x \mapsto \mathbb{P}(X_{e(q)}>x)$ is a discrete completely monotone function. This can be proven for $x \mapsto \mathbb{P}(X_{e(q)}<-x)$ in the same way.
\end{proof}

Let us briefly outline a few prominent examples of meromorphic L\'evy processes. The \emph{$\beta$-class} was introduced in \cite{Kuznetsov09}. It has a characteristic exponent given by

\begin{multline*} 
\Psi(z) = \frac{\sigma^2}{2} z^2 + \mathrm{i} az + \frac{c_1}{\beta_1} \left( B(\alpha_1,1-\lambda_1)-B(\alpha_1-\mathrm{i}z/\beta_1,1-\lambda_1) \right) \\
+ \frac{c_2}{\beta_2} \left( B(\alpha_2,1-\lambda_2)-B(\alpha_2+\mathrm{i}z/\beta_2,1-\lambda_2) \right) 
\end{multline*}
where $B(x,y)=\Gamma(x)\Gamma(y)/\Gamma(x+y)$ is the Beta function and the parameter ranges are $a \in \mathbb{R}$, $\sigma, c_i, \alpha_i, \beta_i>0$ and $\lambda_i \in (0,3) \setminus \{1,2\}$. The corresponding L\'evy measure has density

\[ \mathbf{1}_{\{ x<0 \}} c_2 \frac{e^{\alpha_2 \beta_2 x}}{(1-e^{\beta_2 x})^{\lambda_2}} + \mathbf{1}_{\{ x>0 \}} c_1 \frac{e^{-\alpha_1 \beta_1 x}}{(1-e^{-\beta_1 x})^{\lambda_1}}. \]
Note that setting $\sigma=0$ and $\lambda_i \in (0,2)$ yields a process with paths of bounded variation, all other choices paths of unbounded variation. Furthermore infinite (resp. finite) activity in the jump component can be obtained by setting $\lambda_i \in (1,3)$ (resp. $\lambda_i \not\in (1,3)$).

A second example is the \emph{$\theta$-class} introduced in \cite{Kuznetsov10b}. The L\'evy measure of such processes have a density of the form 

\[ \mathbf{1}_{\{ x<0 \}} c_2 \beta_2 e^{\alpha_2 x} \Theta_k(-x \beta_2) + \mathbf{1}_{\{ x>0 \}} c_1 \beta_1 e^{-\alpha_1 x} \Theta_k(x \beta_1) \]
where $\Theta_k$ is the $k$-th order (fractional) derivative of the theta function $x \mapsto \theta_3(0,e^{-x})$:

\[ \Theta_k(x) = \frac{\mathrm{d}^k}{\mathrm{d}x^k} \theta_3(0,e^{-x}) = \mathbf{1}_{\{ k=0 \}} + 2 \sum_{n \geq 1} n^{2k} e^{-n^2 x}, \quad x>0. \]
The parameter $\chi=k+1/2$ corresponds to the exponent of the singularity of the above density at $x=0$, and when $\chi \in \{ 1/2,3/2,5/2 \}$ (resp. $\chi \in \{ 1,2 \}$) the characteristic exponent is given in terms of trigonometric (resp. digamma) functions.

As a final example we mention \emph{general hypergeometric L\'evy processes}, these were introduced in \cite{Kyprianou09} as an example of how to use Vigon's theory of philantropy (cf. \cite{Vigon02}) for constructing new L\'evy processes. They have a characteristic exponent given by 

\[ \Psi(z) = \frac{\sigma^2}{2} z^2 + \mathrm{i} az + \Phi_1(-\mathrm{i}z) \Phi_2(\mathrm{i}z), \]
where $\sigma, a \in \mathbb{R}$ and $\Phi_i$ is the Laplace exponent of a (possibly killed) subordinator from the above mentioned $\beta$-class, which hence takes the form

\[ \Phi(z) = \kappa + \delta z + \frac{c}{\beta} \left( B(1-\alpha+\gamma,-\gamma)-B(1-\alpha+\gamma+z/\beta,-\gamma) \right) \]
for a killing rate $\kappa \geq 0$. 

These examples illustrate how rich the class of meromorphic L\'evy processes is. 

Let us now return to working out our algorithm in the case of the American put. We start by noting that for the payoff function (\ref{def_payoff_put}) it is straightforward to derive the following simplification of the recursion for the functions $(V^{(n)}_k)_{k \geq 0}$ stated in Theorem \ref{thm_main} (i).

\begin{proposition}\label{prop_alg_put} Suppose that $\mathbb{P}(X_1<0)>0$, i.e. $X$ is not a subordinator. Let $n \geq 1$. There exists a decreasing sequence of points $(\bar{x}^{(n)}_k)_{k \geq 0}$, with $\bar{x}^{(n)}_0:=\log K$, such that for each $k \geq 1$ we have:
\begin{itemize}
\item[(i)] the point $\bar{x}^{(n)}_k$ is the unique solution on $(-\infty,\bar{x}^{(n)}_{k-1})$ of the equation in $z$

\[ e^{-r/n} \mathbb{E}_z \left[  V_{k-1}^{(n)}(X_{\xi^{(n)}}) \right] -f(z)=0\]
\item[(ii)] $V^{(n)}_k$ is given by

\[ V^{(n)}_k(x) = \left\{ \begin{array}{ll}
f(x) & \mbox{if $x \leq \bar{x}^{(n)}_k$} \\
e^{-r/n} \mathbb{E}_x \left[  V_{k-1}^{(n)}(X_{\xi^{(n)}}) \right] & \mbox{if $x > \bar{x}^{(n)}_k$}
\end{array} \right. \]
\item[(iii)] $V^{(n)}_k>V^{(n)}_{k-1}$ on $(\bar{x}^{(n)}_k,\infty)$.
\end{itemize}
\end{proposition}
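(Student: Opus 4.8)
The plan is to establish all three claims simultaneously by induction on $k$, carrying along a set of structural properties of the value functions. Fix $n$ throughout and abbreviate the continuation value by writing $g_k(x):=e^{-r/n}\mathbb{E}_x[V^{(n)}_{k-1}(X_{\xi^{(n)}})]$, so that \eqref{main_rec} reads $V^{(n)}_k=\max\{f,g_k\}$. The properties I would maintain inductively are that $V^{(n)}_k$ is continuous, convex, non-increasing, takes values in $[0,K]$, satisfies $V^{(n)}_k(x)\to K$ as $x\to-\infty$ and $V^{(n)}_k(x)\to 0$ as $x\to+\infty$, and dominates $V^{(n)}_{k-1}$. Continuity and convexity propagate because $g_k$ is an average of the translates $x\mapsto V^{(n)}_{k-1}(x+\,\cdot\,)$ against the (absolutely continuous) law of $X_{\xi^{(n)}}$: for each realisation the translate is convex and continuous, and averaging a bounded family of convex continuous functions preserves both, while a maximum of two convex (resp. non-increasing, continuous) functions is again of that type. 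The limit $g_k(x)\to e^{-r/n}K$ as $x\to-\infty$ follows by dominated convergence from $V^{(n)}_{k-1}\to K$ (and $g_k\to 0$ as $x\to+\infty$ likewise), using the uniform bound $K$.

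The hypothesis $\mathbb{P}(X_1<0)>0$ enters next: it guarantees (via \eqref{dens_X}, whose density is strictly positive on each half-line) that $X_{\xi^{(n)}}$ charges every neighbourhood of $-\infty$, so that $g_k(x)>0$ for every $x$, since $V^{(n)}_{k-1}\geq f>0$ on $(-\infty,\log K)$. Hence on $[\log K,\infty)$, where $f\equiv 0$, we have $V^{(n)}_k=g_k>f$, and the contact set $\{V^{(n)}_k=f\}$ lies strictly left of $\log K$. To pin down its shape I would restrict to $(-\infty,\log K)$, where $f(x)=K-e^x$ and hence $\psi_k:=V^{(n)}_k-f=V^{(n)}_k+e^x-K$ is a sum of convex functions, thus convex and non-negative; the zero set of a non-negative convex function is an interval. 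Since $g_k(x)\to e^{-r/n}K<K$ while $f(x)\to K$ as $x\to-\infty$ (this is where $r>0$ is used), we get $g_k<f$, hence $\psi_k=0$, on a whole neighbourhood of $-\infty$; together with $\psi_k(\log K)=g_k(\log K)>0$ this forces the zero set to be a half-line $(-\infty,\bar x^{(n)}_k]$ with $-\infty<\bar x^{(n)}_k<\log K$. This gives (ii), and since $\psi_k<0$ on $(-\infty,\bar x^{(n)}_k)$ and $\psi_k>0$ on $(\bar x^{(n)}_k,\infty)$, the boundary $\bar x^{(n)}_k$ is in fact the unique zero of $g_k-f$ on all of $\mathbb{R}$, which is the equation in (i).

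It remains to close the induction ($V^{(n)}_k\geq V^{(n)}_{k-1}$), prove (iii), and upgrade $\bar x^{(n)}_k\leq\bar x^{(n)}_{k-1}$ to strict inequality so that the root genuinely lies in $(-\infty,\bar x^{(n)}_{k-1})$. Monotonicity in $k$ follows since $V^{(n)}_{k-1}\geq V^{(n)}_{k-2}$ yields $g_k\geq g_{k-1}$, whence $V^{(n)}_k=\max\{f,g_k\}\geq\max\{f,g_{k-1}\}=V^{(n)}_{k-1}$; this also shrinks the contact sets, giving $\bar x^{(n)}_k\leq\bar x^{(n)}_{k-1}$. For (iii) I would split $(\bar x^{(n)}_k,\infty)$ at $\bar x^{(n)}_{k-1}$: on $(\bar x^{(n)}_k,\bar x^{(n)}_{k-1}]$ one has $V^{(n)}_{k-1}=f<g_k=V^{(n)}_k$, while on $(\bar x^{(n)}_{k-1},\infty)$ one has $V^{(n)}_k-V^{(n)}_{k-1}\geq g_k-g_{k-1}=e^{-r/n}\mathbb{E}_x[(V^{(n)}_{k-1}-V^{(n)}_{k-2})(X_{\xi^{(n)}})]$, which is strictly positive because, by the inductive form of (iii), the integrand is positive on $(\bar x^{(n)}_{k-1},\infty)$ and $X_{\xi^{(n)}}$ charges $(\bar x^{(n)}_{k-1}-x,\infty)$ for such $x$. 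The same device delivers the strict decrease of the boundary: evaluating at $x=\bar x^{(n)}_{k-1}$ and using $\mathbb{P}(X_{\xi^{(n)}}>0)>0$ (again from \eqref{dens_X}) gives $g_k(\bar x^{(n)}_{k-1})>g_{k-1}(\bar x^{(n)}_{k-1})=f(\bar x^{(n)}_{k-1})$, so $\bar x^{(n)}_{k-1}$ sits in the open continuation region of step $k$, forcing $\bar x^{(n)}_k<\bar x^{(n)}_{k-1}$.

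The base case $k=1$ is immediate: $V^{(n)}_0=f$, the function $g_1(x)=e^{-r/n}\mathbb{E}[(K-e^{x+X_{\xi^{(n)}}})^+]$ is convex and strictly positive, and the above applies verbatim. I expect the single-crossing conclusion to be the main obstacle: one must verify that convexity is genuinely preserved under the averaging operator $g_k$ (which rests on $X_{\xi^{(n)}}$ having an absolutely continuous law and on dominated convergence for the limits), and that the contact set is a full half-line rather than a bounded interval — the latter hinges precisely on the $-\infty$ asymptotics, where $r>0$ makes $g_k-f\to K(e^{-r/n}-1)<0$ robustly, independently of the drift of $X$. The secondary delicate point is the strict monotonicity $\bar x^{(n)}_k<\bar x^{(n)}_{k-1}$, which genuinely requires mass of $X_{\xi^{(n)}}$ on the positive half-line; the comparison $g_k\geq g_{k-1}$ on its own yields only the weak inequality.
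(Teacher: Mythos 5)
Your overall strategy coincides with the paper's: induction on $k$, a sign analysis of $g_k-f$ (where $g_k(x)=e^{-r/n}\mathbb{E}_x[V^{(n)}_{k-1}(X_{\xi^{(n)}})]$) near $-\infty$ and at $\log K$, a convexity-based single-crossing argument for uniqueness of the root, and the comparison $g_k\geq g_{k-1}$, made strict by the fact that $X_{\xi^{(n)}}$ puts mass on the relevant half-lines, for part (iii) and for the strict decrease of the boundary. The one genuine flaw lies in the convexity claim on which the single-crossing step rests. You propose to carry ``$V^{(n)}_k$ is convex'' \emph{as a function of $x$} through the induction, propagated by averaging the translates $x\mapsto V^{(n)}_{k-1}(x+\cdot)$ and taking a maximum with $f$. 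But $f(x)=(K-e^x)^+$ equals $K-e^x$ on $(-\infty,\log K)$ and is therefore strictly concave there; the base case $V^{(n)}_0=f$ already fails to be convex in $x$, and so does every $V^{(n)}_k$, since $V^{(n)}_k=f$ on the stopping region $(-\infty,\bar x^{(n)}_k]$. Consequently the assertion that $\psi_k=V^{(n)}_k+e^x-K$ is a sum of convex functions is unjustified, and the conclusion that its zero set is an interval --- the heart of parts (i) and (ii) --- is left without proof.

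The repair is precisely the device the paper uses: convexity holds in the price variable $s=e^x$, not in $x$. From the definition (\ref{def_stop_new}), $s\mapsto V^{(n)}_k(\log s)$ is a supremum of expectations of the functions $s\mapsto(K-se^{y})^+$ (with $y$ the increment of the path), each convex in $s$, hence is itself convex; translates in $x$ become scalings in $s$, so your averaging argument survives the substitution, and $F_k(s):=e^{-r/n}\mathbb{E}\bigl[V^{(n)}_{k-1}(\log s+X_{\xi^{(n)}})\bigr]-(K-s)$ is convex on $(0,K)$, negative as $s\to 0+$ (your observation $e^{-r/n}K-K<0$, which, exactly as in the paper's own proof, silently requires $r>0$) and positive at $s=K$. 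A convex function with these boundary signs has a unique zero on $(0,K]$, which yields (i) and (ii) after undoing the substitution. With that correction the remainder of your argument --- monotonicity of $V^{(n)}_k$ in $k$, part (iii) split at $\bar x^{(n)}_{k-1}$, and the strict inequality $\bar x^{(n)}_k<\bar x^{(n)}_{k-1}$ obtained from $g_k(\bar x^{(n)}_{k-1})>g_{k-1}(\bar x^{(n)}_{k-1})=f(\bar x^{(n)}_{k-1})$ --- goes through and is essentially identical to the paper's proof.
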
 

\begin{proof} The proof is by induction. First we consider $k=1$. Denote for $s>0$

\[ F_1(s) =  e^{-r/n} \mathbb{E} \left[ \left( K-s e^{X_{\xi^{(n)}}} \right)^+ \right] - (K-s)^+ \]
(note that this is just the left hand side of the equation in (i) after the substitution $s=e^x$). Then $F_1(0+)=e^{-r/n}K-K<0$ and $F_1(\exp(\bar{x}^{(n)}_0))=F_1(K)>0$ as $X$ not being a subordinator implies that $\mathbb{P}(X_{\xi^{(n)}}<0)>0$. Furthermore, as $s \mapsto (K-s\alpha)^+$ is convex for any $\alpha$ the first term in the formula of $F_1$ is also convex and thus $F_1$ has a unique zero on $(0,K]$. Combining these observations with Theorem \ref{thm_main} (i) yields (i) and (ii) above. This also shows that $V^{(n)}_1(x)>(K-e^x)$ on $(\bar{x}^{(n)}_1,\bar{x}^{(n)}_0)$, while for  $x \in [\bar{x}^{(n)}_0,\infty)$ we have $V^{(n)}_1(x)>0=(K-e^x)^+$ as $X$ not being a subordinator implies $\mathbb{P}_x(X_{\xi^{(n)}}<\log K)>0$, so (iii) is also established.

Now suppose that (i)-(iii) above hold for $k-1$ (IH (i), IH (ii) and IH (iii)). Analogue to above, denote for $s>0$ the left hand side of the equation in (i) after the substitution $s=e^x$:

\[ F_k(s) =  e^{-r/n} \mathbb{E} \left[  V_{k-1}^{(n)}(\log s + X_{\xi^{(n)}})  \right] - f(\log s). \]
By IH (ii) we again have $F_k(0+)=e^{-r/n}K-K<0$. Also

\[ f(\bar{x}^{(n)}_{k-1}) = e^{-r/n} \mathbb{E}_{\bar{x}^{(n)}_{k-1}} \left[  V_{k-2}^{(n)}(X_{\xi^{(n)}}) \right] < e^{-r/n} \mathbb{E}_{\bar{x}^{(n)}_{k-1}} \left[  V_{k-1}^{(n)}(X_{\xi^{(n)}}) \right], \]
the equality by IH (i) and the inequality by IH (iii) together with $X$ not being a subordinator. This translates to $F_k(\exp(\bar{x}^{(n)}_{k-1}))>0$. As above, the first term in the formula of $F_k$ is convex since $s \mapsto V_{k-1}^{(n)}(\log s + \alpha)$ is convex for any $\alpha$, as is clear from the definition (\ref{def_stop_new}). Consequently $F_k$ has a unique zero on $(0,K]$, located in $(-\infty,\bar{x}^{(n)}_{k-1})$, and in combination with Theorem \ref{thm_main} (i), statement (i) and (ii) follow. 

It remains to show (iii). It follows from the above and IH (ii) that $V^{(n)}_k(x)>V^{(n)}_{k-1}(x)=(K-e^x)$ on $(\bar{x}^{(n)}_k,\bar{x}^{(n)}_{k-1}]$. For $x > \bar{x}^{(n)}_{k-1}$ we have by (ii), IH (ii), IH (iii) and $X$ not being a subordinator that

\[  V^{(n)}_k(x) = e^{-r/n} \mathbb{E}_{x} \left[  V_{k-1}^{(n)}(X_{\xi^{(n)}}) \right] > e^{-r/n} \mathbb{E}_{x} \left[  V_{k-2}^{(n)}(X_{\xi^{(n)}}) \right] = V^{(n)}_{k-1}(x) \]
and we are done.
\end{proof}

Note that the above result shows that in the approximating value functions it is optimal to stop as soon as $X$ falls below a certain level. Or, equivalently, as soon as the stock price $S=\exp(X)$ falls below a certain level. This is structurally consistent with the exact optimal exercise strategy in the original problem, which is to stop as soon as $S$ falls below a certain time dependent boundary (cf. e.g. \cite{Karat01}). 

If $X$ satisfies (\ref{dens_X}), i.e. in particular if $X$ is a meromorphic L\'evy process, then it turns out the expectations in the above Proposition \ref{prop_alg_put} can be worked out explicitly. The proof uses induction over $k$ and consists of tedious yet straightforward computations, it is therefore omitted.

\begin{proposition}\label{prop_put_form} Suppose that $X$ satisfies (\ref{dens_X}) and is not compound Poisson. Fix some $n \geq 1$. For any $k \geq 0$ the function $V^{(n)}_k$ can be expressed in piecewise form as follows:

\begin{equation}\label{formula_Vk}
 \begin{split}
V^{(n)}_k(x) = {} & \mathbf{1}_{\{ x \geq \bar{x}^{(n)}_0 \}}  \sum_{j=0}^{N_{+}} e^{-\zeta_{+}^{(n)}(j) x}  \sum_{i=0}^{k-1} A^{(n)}_{+}(i,j,0,k) x^i \\
& + \sum_{m=1}^k  \mathbf{1}_{\{ x \in [\bar{x}^{(n)}_{m},\bar{x}^{(n)}_{m-1}) \}} \Biggl( \sum_{j=0}^{N_{-}} e^{-\zeta_{-}^{(n)}(j) x} \sum_{i=0}^{k-m} A^{(n)}_{-}(i,j,m,k) x^i \\
& \qquad + \sum_{j=0}^{N_{+}} e^{-\zeta_{+}^{(n)}(j) x} \sum_{i=0}^{k-m} A^{(n)}_{+}(i,j,m,k) x^i - B^{(n)}(m,k) e^x + C^{(n)}(m,k) \Biggr) \\
& + \mathbf{1}_{\{ x<\bar{x}^{(n)}_k \}} (K-e^x),
\end{split}
\end{equation}
where $\bar{x}^{(n)}_0=\log K$. Expressions for $\bar{x}^{(n)}_k$ and all the coefficients involved can be found in Appendix \ref{sec_app_form}.
\end{proposition}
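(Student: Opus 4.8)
The plan is to argue by induction on $k$, feeding the structural description of Proposition~\ref{prop_alg_put} into the recursion (\ref{main_rec}). The base case $k=0$ is immediate from (\ref{formula_Vk}): the sum $\sum_{i=0}^{k-1}$ is then empty, the sum over $m$ is empty, and only the term $\mathbf{1}_{\{x<\bar{x}^{(n)}_0\}}(K-e^x)$ survives, which equals $(K-e^x)^+=f(x)$ since $\bar{x}^{(n)}_0=\log K$. For the inductive step I assume (\ref{formula_Vk}) at level $k-1$. By Proposition~\ref{prop_alg_put}(ii) the function $V^{(n)}_k$ equals $K-e^x$ below $\bar{x}^{(n)}_k$ and equals the continuation value $G_k(x):=e^{-r/n}\mathbb{E}_x[V^{(n)}_{k-1}(X_{\xi^{(n)}})]$ above it, while $\bar{x}^{(n)}_k$ is the unique root of $G_k(z)=f(z)$ on $(-\infty,\bar{x}^{(n)}_{k-1})$ supplied by Proposition~\ref{prop_alg_put}(i). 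Thus everything reduces to computing $G_k$ explicitly and recognising its form.

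To do this I would make $G_k$ fully explicit using spatial homogeneity, $\mathbb{E}_x[g(X_{\xi^{(n)}})]=\int g(x+u-x)\,\mathbb{P}(X_{\xi^{(n)}}\in\mathrm{d}(u-x))$, inserting the density (\ref{dens_X}) with $q=n$ and substituting $u=x+y$, which gives
\begin{align*}
G_k(x) = e^{-r/n}\Biggl(&\sum_{j=0}^{N_+}c^{(n)}_+(j)\,e^{-\zeta^{(n)}_+(j)x}\int_{-\infty}^{x}V^{(n)}_{k-1}(u)\,e^{\zeta^{(n)}_+(j)u}\,\mathrm{d}u \\
&{}+\sum_{j=0}^{N_-}c^{(n)}_-(j)\,e^{-\zeta^{(n)}_-(j)x}\int_{x}^{\infty}V^{(n)}_{k-1}(u)\,e^{\zeta^{(n)}_-(j)u}\,\mathrm{d}u\Biggr).
\end{align*}
I then split each integral at the breakpoints $\bar{x}^{(n)}_1>\cdots>\bar{x}^{(n)}_{k-1}$ lying in its range and insert the induction hypothesis for $V^{(n)}_{k-1}$ piecewise. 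Every resulting summand is of the form $\int u^i e^{\gamma u}\,\mathrm{d}u$ with $\gamma\in\{\zeta^{(n)}_\pm(j)+\beta\}$ and $\beta\in\{0,1,-\zeta^{(n)}_\pm(j')\}$ ranging over the exponents carried by $V^{(n)}_{k-1}$; the antiderivative of $u^i e^{\gamma u}$ is again a degree-$i$ polynomial times $e^{\gamma u}$ (degree $i+1$ when $\gamma=0$), so the computation never leaves the class of functions appearing in (\ref{formula_Vk}).

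The bookkeeping then organises itself around three mechanisms, which is really the heart of the matter. A \emph{resonant} pairing, where a density exponent $\zeta^{(n)}_\pm(j)$ cancels the matching exponent $-\zeta^{(n)}_\pm(j)$ in $V^{(n)}_{k-1}$, forces the antiderivative of a pure polynomial and so raises the polynomial degree by one: this is exactly what turns the degree bound $(k-1)-1$ into $k-1$ in the top region and $(k-1)-m$ into $k-m$ on the inner intervals, matching the degrees in (\ref{formula_Vk}). Evaluation of the antiderivatives at the \emph{moving} endpoint $u=x$, followed by multiplication by the prefactor $e^{-\zeta^{(n)}_\pm(j)x}$, produces terms of type $e^{\beta x}$; in particular the $e^u$- and constant-components that $V^{(n)}_{k-1}$ carries at $u=x$ (the piece $K-e^u$ on the lowest interval and the $-B^{(n)}e^u+C^{(n)}$ pieces on the inner intervals) generate precisely the $-B^{(n)}(m,k)e^x$ and $C^{(n)}(m,k)$ terms. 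Evaluation at a \emph{fixed} breakpoint $\bar{x}^{(n)}_m$ leaves the prefactor untouched and hence contributes the pure $e^{-\zeta^{(n)}_\pm(j)x}$ terms. The absence of $e^x$ and constant terms in the top region $x\ge\bar{x}^{(n)}_0$ reflects that there $V^{(n)}_k(x)\to0$ as $x\to\infty$, so the moving endpoint only ever meets the top-region form of $V^{(n)}_{k-1}$, which carries neither. Gluing $G_k$ with $K-e^x$ at $\bar{x}^{(n)}_k$ as dictated by Proposition~\ref{prop_alg_put} yields (\ref{formula_Vk}), and the recursions for $A^{(n)}_\pm$, $B^{(n)}$, $C^{(n)}$ (collected in the appendix) are read off by matching coefficients.

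The genuinely routine-but-laborious part is this coefficient matching over all interval/endpoint combinations. The one analytic point requiring care, and which I expect to be the main obstacle, is the infinite-activity case $N_\pm=\infty$: there one must justify interchanging the infinite sums in (\ref{dens_X}) with the integrals and establish convergence of the resulting series. The subtlety is that $e^{-\zeta^{(n)}_+(j)x}$ blows up for $x<0$ as $\zeta^{(n)}_+(j)\to\infty$, but this is compensated by the decay of the paired integral $\int_{-\infty}^x(\cdots)e^{\zeta^{(n)}_+(j)u}\,\mathrm{d}u$, so one must bound the \emph{product} $e^{-\zeta^{(n)}_+(j)x}\int_{-\infty}^x(\cdots)e^{\zeta^{(n)}_+(j)u}\,\mathrm{d}u$ before summing; this quantity is $O(c^{(n)}_+(j)/\zeta^{(n)}_+(j))$ uniformly on bounded $x$-sets, and likewise for the $\zeta^{(n)}_-$ sum, whence absolute convergence and the validity of dominated convergence follow from the normalisation $\sum_{j}\bigl(c^{(n)}_+(j)/\zeta^{(n)}_+(j)-c^{(n)}_-(j)/\zeta^{(n)}_-(j)\bigr)=1$ stated below (\ref{dens_X}). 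This is also the only place where the hypothesis that $X$ is not compound Poisson enters beyond Proposition~\ref{prop_alg_put}, via the infinitely many exponents $\zeta^{(n)}_\pm(j)$ tending to $\pm\infty$.
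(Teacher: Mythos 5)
Your proof follows exactly the route the paper indicates (and omits as ``tedious yet straightforward''): induction on $k$, feeding Proposition \ref{prop_alg_put} into the recursion (\ref{main_rec}), and integrating the piecewise form of $V^{(n)}_{k-1}$ against the mixture-of-exponentials density (\ref{dens_X}), with the resonant pairings accounting for the growth of the polynomial degrees. Your closing remarks on justifying the interchange of the infinite sums with the integrals when $N_\pm=\infty$ address a point the paper passes over silently, and are a welcome addition.
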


\begin{remark}\label{rem1}
\begin{itemize}
 \item[(i)] Note that in the Brownian motion case, the structure of the formulae for the $V^{(n)}_k$'s from Proposition \ref{prop_put_form} matches the corresponding formulae found by Carr with his Canadisation method (cf. Section \ref{sec_alg}). That is, the formulae coincide except for the values of the coefficients $A$, $B$ and $C$, and for the values of the boundary points $\bar{x}$.
\item[(ii)] Any payoff function $f$ that can (piecewise) be expressed as a linear combination of functions of the form $A x^i e^{B x} + C$ for $A, B, C \in \mathbb{R}$ and $i \in \mathbb{N}$ can be dealt with in similar fashion and yields a similar structure as (\ref{formula_Vk}). If a result as in Proposition \ref{prop_alg_put} is not available, i.e. if it is a priori not clear how the maximum in the recursive relation (\ref{main_rec}) works out, a way forward is to implement a dynamic decision rule where first a formula for 

\[ F_k(x) = e^{-r/n} \mathbb{E}_{x} \left[  V_{k-1}^{(n)}(X_{\xi^{(n)}}) \right] \]
is computed (which is still of the same structure as in (\ref{formula_Vk})) and then $V_k=\max \{ F_k, f \}$ can be determined in a straightforward way. 

Finally, a payoff function that is not of this convenient form mentioned above can easily be approximated by a function which is of this convenient form, and it is straightforward to estimate the error due to this approximation.
\item[(iii)] It would be possible to further extend this approach to deal with more complicated payoffs involving path dependency, for instance to cases where the payoff at time $t$ depends also on the running supremum $\overline{X}_t = \sup_{s \leq t} X_s$ as is the case for `lookback options'. A step in the algorithm would then not only require $X_{\xi^{(q)}}$ but the pair $(X_{\xi^{(q)}},\overline{X}_{\xi^{(q)}})$ rather. Explicit expressions for the law of this pair is available for meromorphic L\'evy processes.
\end{itemize}
\end{remark}

\section{Some numerics for the American put}\label{sec_num}

In this section we discuss some numerics for the American put, obtained by a computer implementation of the result in Proposition \ref{prop_put_form}. In particular we present some graphs of approximating value functions $V^{(n)}_k$ and some graphs of the approximative optimal exercise boundary. For the latter, as is well known (cf. e.g. \cite{Karat01}) the optimal stopping time $\tau^*$ in (\ref{def_stop_orig}) with $f(x)=(K-e^x)^+$ is of the form

\begin{equation}\label{opt_time_Am_put} 
\tau^* = \inf \{ u \geq 0 \, | \, v(T-u,X_u) = f(X_u) \} = \inf \{ u \geq 0 \, | \, X_u \leq b(T-u) \} 
\end{equation}
for some non-increasing function $b$ on $\mathbb{R}_{>0}$ referred to as the optimal exercise boundary, which may hence be defined as

\[ b(t) := \sup \{ y \in \mathbb{R} \, | \, v(t,y)=f(y) \} \leq \log (K) \quad \text{for $t>0$.} \]
Hence, since for large $n$ we have $V^{(n)}_k(\,.\, ) \approx v(k/n,\,.\,)$ for all $k \geq 1$ and recalling the result of Proposition \ref{prop_alg_put} it should be expected\footnote{We do not provide a rigorous proof of this, which seems slightly tricky due to the fact that the principle of smooth pasting dictates that at least when the driving L\'evy process has a Gaussian part $v$ `pastes smoothly' onto $f$, i.e. their first derivatives in the space component coincide. Consequently it is not a straightforward application of the Implicit function theorem} that the set of points $\{ (k/n,\bar{x}^{(n)}_k) \}_{k \geq 1}$, in the captions of the plots referred to as `boundary points', form an approximation of the set $\{ (k/n,b(k/n)) \}_{k \geq 1}$.

Next, we briefly describe the two main tasks for creating the graphs of different value functions and boundary points presented further below. 

The first step is the calculation of the solutions $\zeta_{+}^{(q)}(i)$ and $\zeta_{-}^{(q)}(j)$ of $z \mapsto q+\Psi(\mathrm{i}z)$ and of the coefficients $c^{(q)}_{+}(i)$ and $c^{(q)}_{-}(j)$ by using Proposition \ref{prop_MProc} for $i \in \left\{0,\ldots, N_{+}\right\}$ and $j \in \left\{0,\ldots, N_{-}\right\}$. In case a meromorphic L\'evy process is used for which $N_{+}=\infty$ or  $N_{-}=\infty$ the infinite sums need to be truncated. For this we used the ad-hoc approach of ensuring that the truncated sums represent at least $99\%$ of the mass of $X_{\xi^{(n)}}$, i.e. (compare with (\ref{dens_X})) we determined $M^+, M^-$ such that

\[ \sum_{j=0}^{M^+} \frac{c^{(n)}_{+}(j)}{\zeta_{+}(j)} - \sum_{j=0}^{M^-} \frac{c^{(n)}_{-}(j)}{\zeta_{-}(j)} \geq 0.99 \]
and then normalised the coefficients. It is worth noting that $M$ is larger in the case of meromorphic L\'evy process with finite variation than in the cases with infinite variation. Particularly, if $\sigma>0$ the largest part of the probability mass is concentrated on $\zeta_{+}(0)$ and $\zeta_{-}(0)$ whereas the other zeroes contribute only a small part.

The second main step is the recursive computation of the value function and the boundary points using formula (\ref{formula_Vk}). A program for this purpose was written in the language C++. We found that in certain cases the algorithm may need more than the default `double precision' as working precision in order to have enough significant digits left in the end result. For this we made use of the `GMP/MPFR/MPFRC++' packages. Furthermore we made use of `OpenMP' to parallelise the operations. 

The L\'evy processes used in this section are Brownian motion with drift, Brownian motion with drift plus exponentially distributed jumps (Kou model) and the $\beta$-class, see Section \ref{sec_example}.


The Figures \ref{fig:lambdavalue} and \ref{fig:lambdabound} contain different value functions and its boundary points for processes of the $\beta$-class. We compare different values of $\lambda_{1}=\lambda_{2}$ which are responsible for the small jump behaviour. One process is similar to a CGMY process ($\lambda_{1}=1.8$, finite variation and infinite activity, see \cite{carr02}). The other processes have finite variation and finite activity ($\lambda_{1}=0.95$) respectively infinite variation and infinite activity ($\lambda_{1}=2.1$). \\

\begin{figure}[H]
\centering
\centerline{\includegraphics[width=1.2\textwidth]{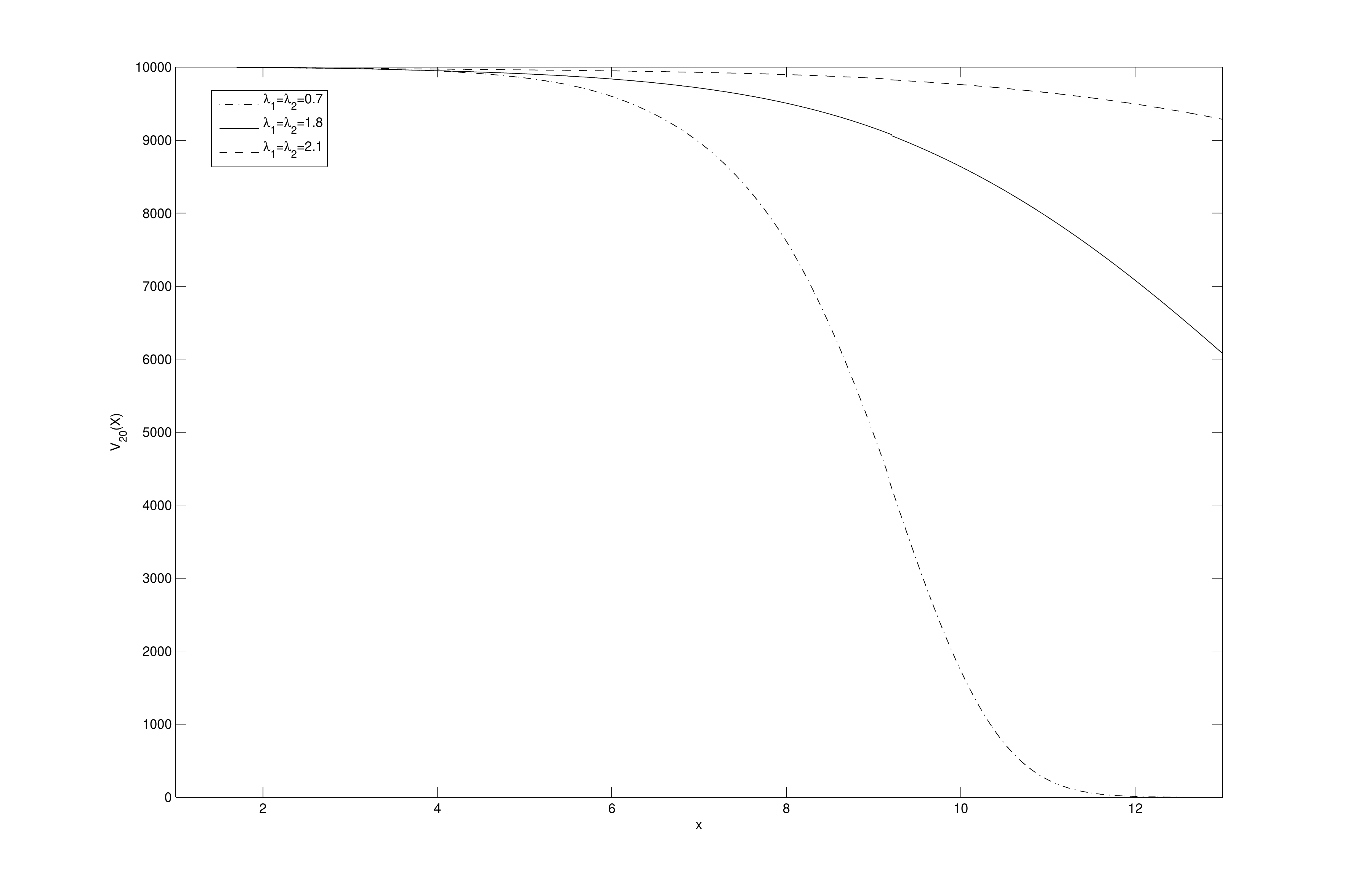}}
\caption{Plots of the approximating value function $V^{(n)}_k$ where the driving process $X$ is of the $\beta$-class with parameter values $\sigma=0, \alpha_{1}=10, \alpha_{2}=100, \beta_{1}=\beta_{2}=0.5, c_{1}=c_{2}=100$ and $\lambda_{1}=\lambda_{2}=0.95, 1.8, 2.1$. The drift $\mu$ is chosen as the solution of $\Psi(-I)=-r$. Furthermore $n=100, k=20, r=0.05, K=10000$ and $M^+=31, M^-=61$}
\label{fig:lambdavalue}
\end{figure}

\begin{figure}[H]
\centering
\centerline{\includegraphics[width=1.2\textwidth]{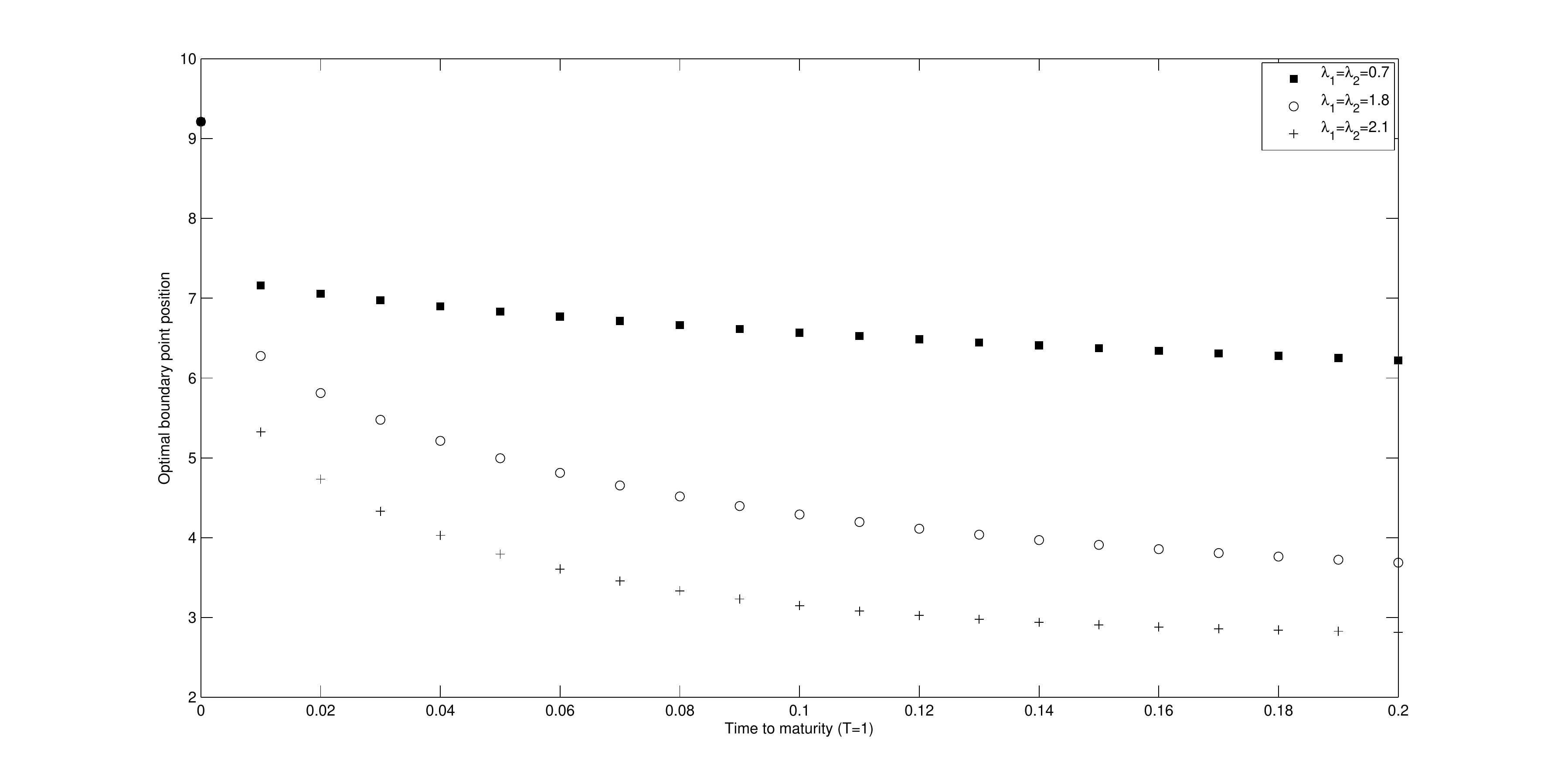}}
\caption{Three sets of boundary points obtained using the same parameter values and settings as in Figure \ref{fig:lambdavalue} 
}
\label{fig:lambdabound}
\end{figure}

In Figures \ref{fig:betasigmacompvalue} and \ref{fig:betasigmacompbound} the value functions and boundary points for a process of the $\beta$-class with infinite variation and infinite activity are plotted for different values of the Gaussian coefficient $\sigma$. 
If we compare over certain intervals $(\bar{x}^{(q)}_{i}, \bar{x}^{(q)}_{j})$, $i>j$ the curves of the value functions and boundary points for different $\sigma_{1}$ and $\sigma_{2}$, $\sigma_{1}, \sigma_{2} \in \{0,1,2,6\}$ we observe the following. If the curve of the value function for $\sigma_{1}$ is steeper on $(\bar{x}^{(q)}_{i}, \bar{x}^{(q)}_{j})$ than the one for a different $\sigma_{2}$, then the curve of the boundary points for $\sigma_{1}$ appears to be more flat on that interval and vice versa. This is in accordance to general optimal stopping theory. 
Note that for $\sigma=0$ there seems to be a jump in the boundary at $0$, that is Figure \ref{fig:betasigmacompbound} indicates that $b(0+)<\log K$ if $\sigma=0$ while $b(0+)=\log K$ if $\sigma>0$. This is consistent with the observation in \cite{lamberton2011}.

\begin{figure}[H]
\centering
\centerline{\includegraphics[width=1.2\textwidth]{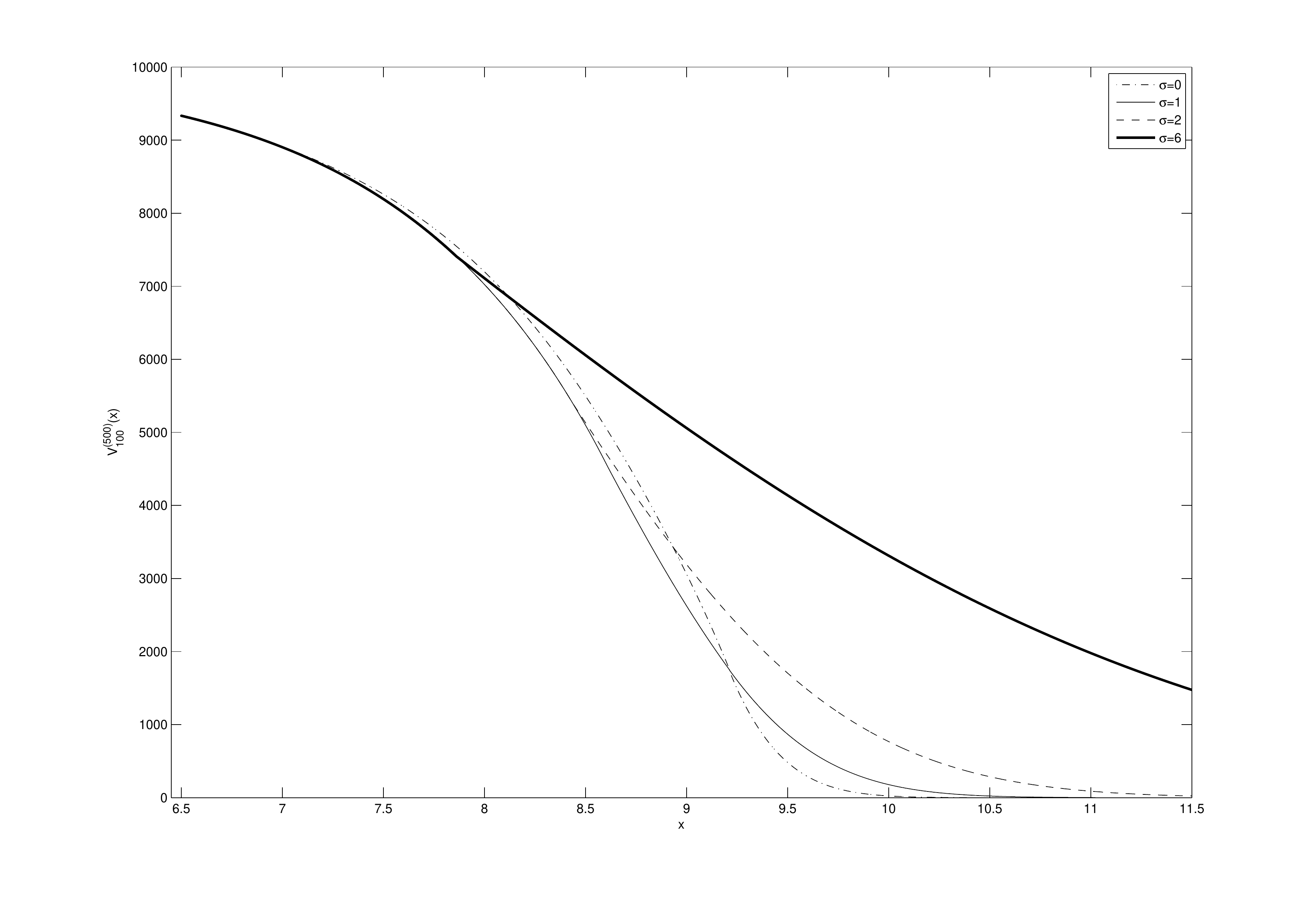}}
\caption{Plots of the approximating value function $V^{(n)}_k$ where the driving process $X$ is of the $\beta$-class with infinite variation and infinite activity. Parameter values are $\sigma=0 ,1 ,2 ,6,\alpha_{1}=56, \alpha_{2}=56.4, \beta_{1}=\beta_{2}=2, c_{1}=c_{2}=1.5$ and $\lambda_{1}=\lambda_{2}=2.8$. Furthermore $n=500, k=100, r=0.05, K=10000$ and $M^-=M^+=1$}
\label{fig:betasigmacompvalue}
\end{figure}

\begin{figure}[H]
\centering
\centerline{\includegraphics[width=1.2\textwidth]{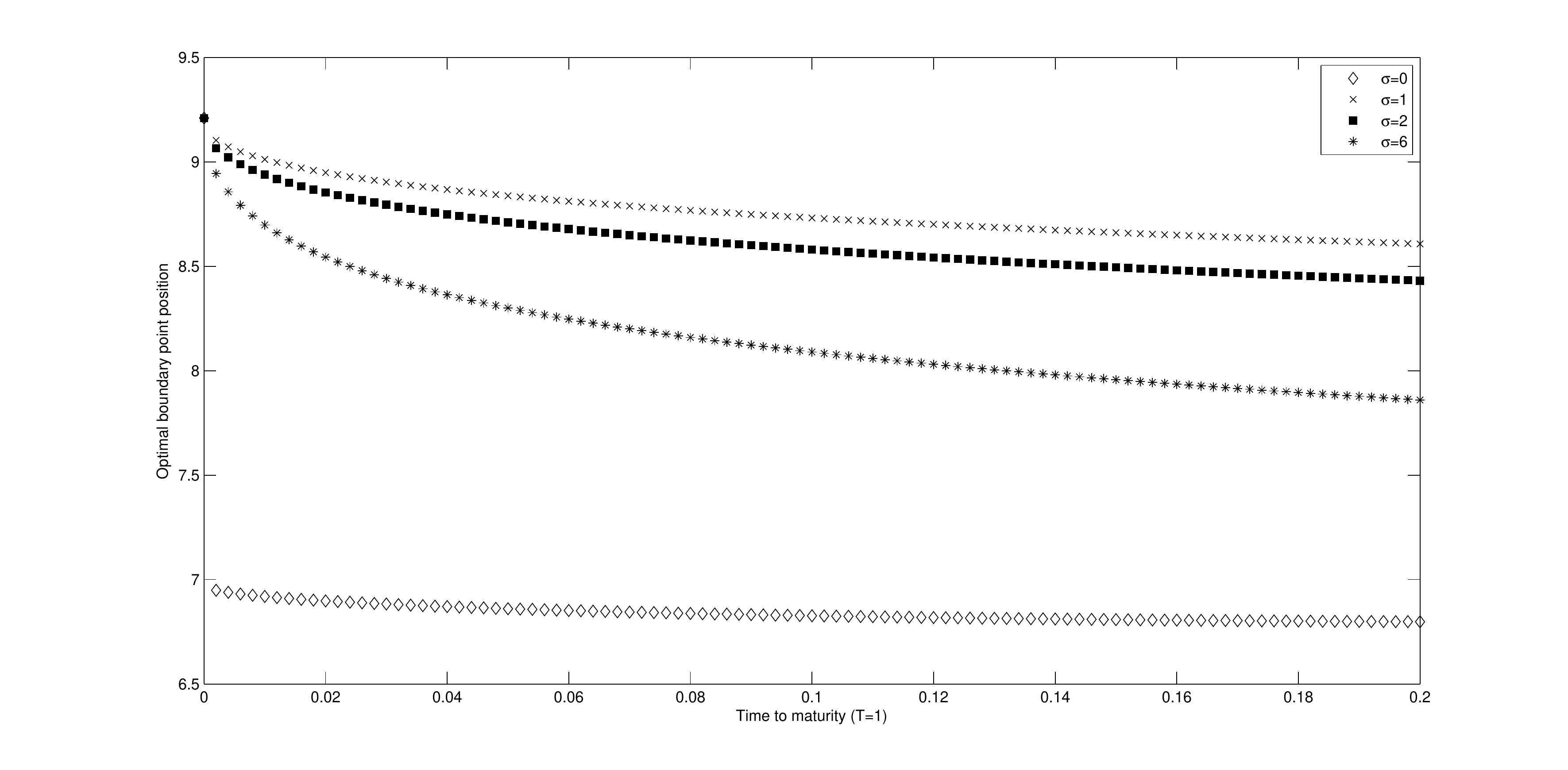}}
\caption{Four sets of boundary points obtained using the same parameter values and settings as in Figure \ref{fig:betasigmacompvalue} 
}
\label{fig:betasigmacompbound}
\end{figure}

Finally, Figure \ref{fig:relativedifference} illustrates the convergence. Hereby we compare the relative difference between the value functions for several $n$ and $k$ with a constant ratio $k/n=0.2$.

\begin{figure}[H]
\centering
\centerline{\includegraphics[width=1.2\textwidth]{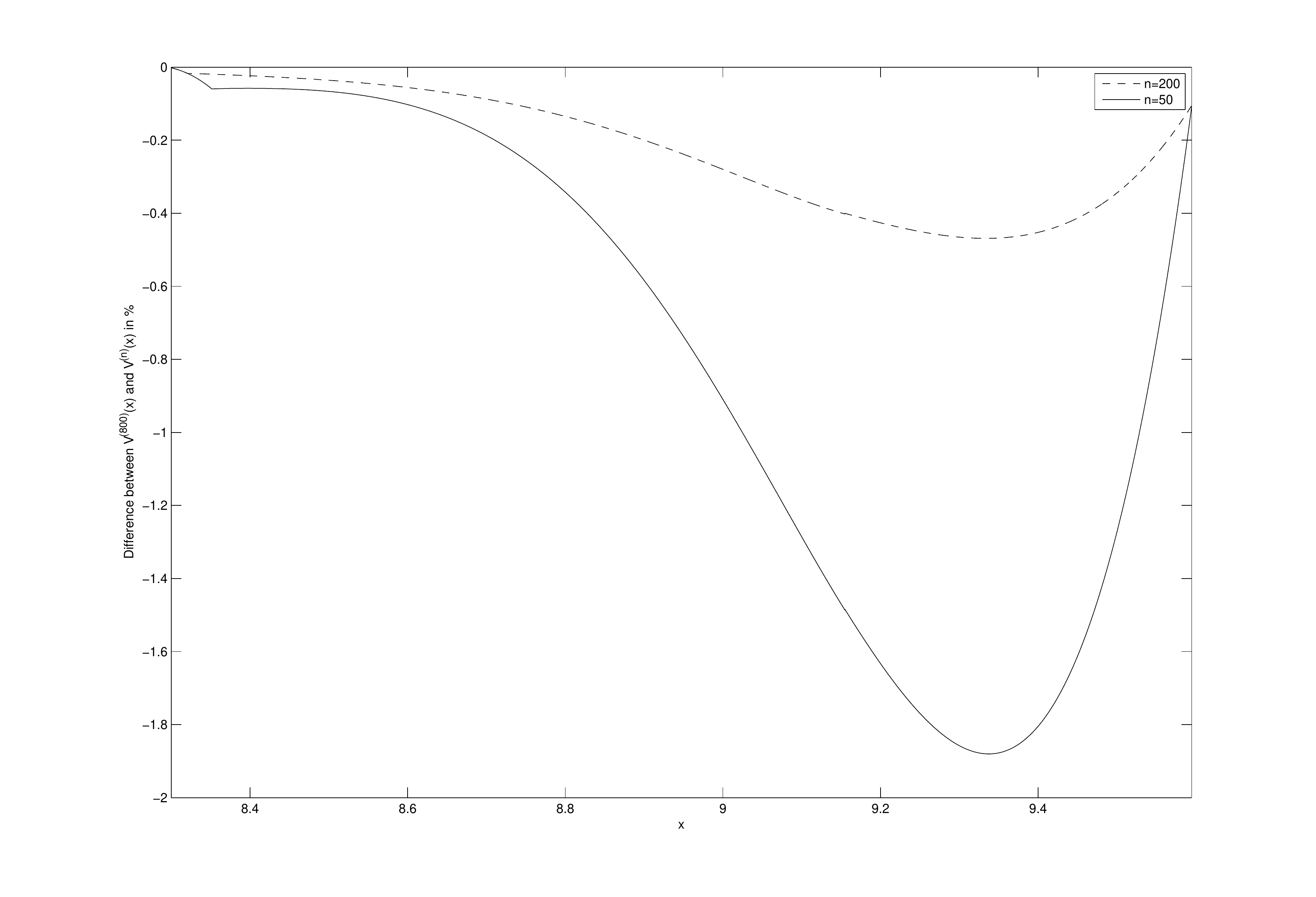}}
\caption{Plots of the relative difference between three approximating value functions, for $n=800$ and $k=160$; $n=200$ and $k=40$; $n=50$ and $k=10$. The plots are `zoomed in' to the part of the $x$-axis where the difference is largest. The case $n=800$ and $k=160$ is taken as the base. The underlying $\beta$-class process with infinite variation and infinite activity has the following parameters: $\sigma=1, \alpha_{1}=1, \alpha_{2}=1, \beta_{1}=\beta_{2}=80, c_{1}=c_{2}=1$ and $\lambda_{1}=\lambda_{2}=2.3$. The drift $\mu$ is chosen as the solution of $\Psi(-I)=-r$.  Furthermore $r=0.05, K=10000$ and $M^+=M^-=1$} 
\label{fig:relativedifference}
\end{figure}

\newpage

\section*{Acknowledgements}

The authors are very grateful to Alexey Kuznetsov, Andreas Kyprianou and Antonio Murillo Salas for very useful discussions \& suggestions, and to the IT services team at the University of Manchester for their support in computer questions.

\appendix
\section{Formulae for the coefficients appearing in Proposition \ref{prop_put_form}}\label{sec_app_form}
\renewcommand{\theequation}{A.\arabic{equation}}
\setcounter{equation}{0}

This Appendix contains expressions for the coefficients that appear in Proposition \ref{prop_put_form}, recursively in $k$ for some $n \geq 1$ fixed. Recall that the starting point for the recursion is (\ref{formula_Vk}) for $k=0$, which contains only one coefficient, namely $\bar{x}^{(n)}_{0}:=\log K$. Throughout the sequel, fix some $k \geq 1$. 

Let the intervals $I_{l}^{(k)}=[\bar{x}^{(n)}_{l},\bar{x}^{(n)}_{l-1})$ for $l=1,\ldots,k-1$, $I_0=[\bar{x}^{(n)}_{0},\infty)$ and $I_k=(-\infty,\bar{x}^{(n)}_{k-1})$. Furthermore let $F_+(x;l,j,k)$ be an antiderivative of $x \mapsto V^{(n)}_{k-1}(x) e^{\zeta_{+}^{(n)}(j) x}$ on the interval $I_{l}^{(k)}$ and $F_-(x;l,j,k)$ an antiderivative of $x \mapsto V^{(n)}_{k-1}(x) e^{\zeta_{-}^{(n)}(j) x}$ on the same interval $I_{l}^{(k)}$. Recalling the formula of $V^{(n)}_{k-1}$ from (\ref{formula_Vk}), note that $F_+(\argdot;l,j,k)$ and $F_-(\argdot;l,j,k)$ consist of sums of antiderivatives of functions of the form $x \mapsto x^i e^{bx}$ for $i \in \mathbb{N}$, it is hence straightforward to apply partial integration and express them in terms of elementary functions. For compactness we do not spell out these formulae in all detail here.

Furthermore, define

\begin{eqnarray}
\Upsilon_0 & = & \sum_{q=0}^{N_+} \frac{c^{(n)}_{+}(q)}{\zeta_{+}^{(n)}(q)+1} - \sum_{q=0}^{N_-} \frac{c^{(n)}_{-}(q)}{\zeta_{-}^{(n)}(q)+1}, \nonumber\\
\Upsilon^{(n)}_1(h,j) & = & (-1)^{h} \Bigg( \sum_{q=0,\,q \not=j}^{N_+} c^{(n)}_{+}(q) \left( \zeta_{+}^{(n)}(q)-\zeta_{+}^{(n)}(j) \right)^{-(h+1)} \nonumber\\
& & \quad - \sum_{q=0}^{N_-} c^{(n)}_{-}(q) \left( \zeta_{-}^{(n)}(q)-\zeta_{+}^{(n)}(j) \right)^{-(h+1)} \Bigg), \nonumber\\
\Upsilon^{(n)}_2(h,j) & = & (-1)^{h} \Bigg( \sum_{q=0}^{N_+} c^{(n)}_{+}(q) \left( \zeta_{+}^{(n)}(q)-\zeta_{-}^{(n)}(j) \right)^{-(h+1)} \nonumber\\
& & \quad - \sum_{q=0,\, q\not= j}^{N_-} c^{(n)}_{-}(q) \left( \zeta_{-}^{(n)}(q)-\zeta_{-}^{(n)}(j) \right)^{-(h+1)} \Bigg), \nonumber
\end{eqnarray}
where $h \in \mathbb{N}$, $j \in \{0,\ldots,N_+\}$ for $\Upsilon^{(n)}_1$ and $j \in \{0,\ldots,N_-\}$ for $\Upsilon^{(n)}_2$.

The coefficients appearing in the formula (\ref{formula_Vk}) for $V^{(n)}_{k}$ can now be expressed as follows -- note that we as usual understand an empty sum as equal to zero, and that the expressions for $A^{(n)}_{+}(\argdot,j,\argdot,\argdot)$ (resp. $A^{(n)}_{-}(\argdot,j,\argdot,\argdot)$) below are valid for all $j \in \{0,\ldots,N_+\}$ (resp. $j \in \{0,\ldots,N_-\}$):\newline
i) On the interval $[\bar{x}^{(n)}_{k},\bar{x}^{(n)}_{k-1})$:

\begin{eqnarray}
A^{(n)}_{+}(0,j,k,k) & = & 0; \nonumber\\
A^{(n)}_{-}(0,j,k,k) & = & e^{-r/n} c^{(n)}_{-}(j) \Bigl( F_-(\bar{x}^{(n)}_{k-1};k,j,k)-F_-(\bar{x}^{(n)}_{0};0,j,k) \nonumber\\
 & & + \sum_{l=1}^{k-1} \left( F_-(\bar{x}^{(n)}_{l-1};l,j,k)-F_-(\bar{x}^{(n)}_{l};l,j,k) \right) \Bigr); \nonumber\\
B^{(n)}(k,k) & = & e^{-r/n} \Upsilon_0; \nonumber\\
C^{(n)}(k,k) & = & e^{-r/n} K, \nonumber 
\end{eqnarray}
where $\bar{x}^{(n)}_{k}$ is the unique solution on $(-\infty,\bar{x}^{(n)}_{k-1})$ of the equation in $z$:

\[ \sum_{q=0}^{N_{-}} e^{-\zeta_{-}^{(n)}(q) z} A^{(n)}_{-}(0,q,k,k) - B^{(n)}(k,k) e^z + C^{(n)}(k,k) = K-e^z. \]

\noindent
ii) On the interval $[\bar{x}^{(n)}_{m},\bar{x}^{(n)}_{m-1})$ for $m=1,\ldots,k-1$:

\begin{eqnarray}
A^{(n)}_{+}(0,j,m,k) & = & e^{-r/n} c^{(n)}_{+}(j) \Biggl( F_+(\bar{x}^{(n)}_{k-1};k,j,k)-F_+(\bar{x}^{(n)}_{m};m,j,k) \nonumber\\
  &  & \qquad +\sum_{l=m+1}^{k-1} \left( F_+(\bar{x}^{(n)}_{l-1};l,j,k)-F_+(\bar{x}^{(n)}_{l};l,j,k) \right) \Biggr) \nonumber\\
  &  & + \, e^{-r/n} \sum_{h=0}^{k-m-1} A^{(n)}_{+}(h,j,m,k-1)  \Upsilon^{(n)}_1(h,j) h!; \nonumber\\
A^{(n)}_{+}(i,j,m,k) & = & e^{-r/n}  \sum_{h=1}^{k-m-i} A^{(n)}_{+}(i+h-1,j,m,k-1)  \Upsilon^{(n)}_1(h-1,j) \frac{(h+i-1)!}{i!} \nonumber\\
 & & + \, e^{-r/n} \frac{c^{(n)}_{+}(j)}{i} A^{(n)}_{+}(i-1,j,m,k-1), \quad \forall i \in \{ 1,\ldots,k-m \}; \nonumber\\
 A^{(n)}_{-}(0,j,m,k) & = & e^{-r/n} c^{(n)}_{-}(j) \Biggl( F_-(\bar{x}^{(n)}_{m-1};m,j,k)-F_-(\bar{x}^{(n)}_{0};0,j,k) \nonumber\\
  &  & \qquad +\sum_{l=1}^{m-1} \left( F_-(\bar{x}^{(n)}_{l-1};l,j,k)-F_-(\bar{x}^{(n)}_{l};l,j,k) \right) \Biggr) \nonumber\\
  &  & + \, e^{-r/n} \sum_{h=0}^{k-m-1} A^{(n)}_{-}(h,j,m,k-1)  \Upsilon^{(n)}_2(h,j) h!; \nonumber\\
A^{(n)}_{-}(i,j,m,k) & = & e^{-r/n}  \sum_{h=1}^{k-m-i} A^{(n)}_{-}(i-1+h,j,m,k-1)  \Upsilon^{(n)}_2(h-1,j) \frac{(h+i-1)!}{i!} \nonumber\\
 & & - \, e^{-r/n} \frac{c^{(n)}_{-}(j)}{i} A^{(n)}_{-}(i-1,j,m,k-1), \quad \forall i \in \{ 1,\ldots,k-m \}; \nonumber\\
B^{(n)}(m,k) & = & e^{-r/n} \Upsilon_0 B^{(n)}(m,k-1); \nonumber\\
C^{(n)}(m,k) & = & e^{-r/n} C^{(n)}(m,k-1). \nonumber  
\end{eqnarray}

\noindent
iii) On the interval $[\bar{x}^{(n)}_{0},\infty]$:

\begin{eqnarray}
A^{(n)}_{+}(0,j,0,k) & = & e^{-r/n} c^{(n)}_{+}(j) \Biggl( F_+(\bar{x}^{(n)}_{k-1};k,j,k)-F_+(\bar{x}^{(n)}_{0};0,j,k) \nonumber\\
  &  & \qquad +\sum_{l=1}^{k-1} \left( F_+(\bar{x}^{(n)}_{l-1};l,j,k)-F_+(\bar{x}^{(n)}_{l};l,j,k) \right) \Biggr) \nonumber\\
  &  & + \, e^{-r/n} \sum_{h=1}^{k-1} A^{(n)}_{+}(h-1,j,0,k-1)  \Upsilon^{(n)}_1(h-1,j) (h-1)!; \nonumber\\
A^{(n)}_{+}(i,j,0,k) & = & e^{-r/n}  \sum_{h=1}^{k-1-i} A^{(n)}_{+}(i+h-1,j,0,k-1)  \Upsilon^{(n)}_1(h-1,j) \frac{(h+i-1)!}{i!} \nonumber\\
 & & + \, e^{-r/n} \frac{c^{(n)}_{+}(j)}{i} A^{(n)}_{+}(i-1,j,0,k-1), \quad \forall i \in \{ 1,\ldots,k-1 \}. \nonumber
\end{eqnarray}

\end{document}